\renewcommand{\emptyset}{\varnothing}
\newcommand{\QQ}{\mathbb Q}
\newcommand{\ZZ}{\mathbb Z}
\newcommand{\id}{\mathrm{id}}
\theoremstyle{definition}
\newtheorem{thm}{Theorem}[section]
\newtheorem{cor}[thm]{Corollary}
\newtheorem{lem}[thm]{Lemma}
\newtheorem{prop}[thm]{Proposition}
\newtheorem{defn}[thm]{Definition}
\newtheorem{eg}[thm]{Example}
\newtheorem{rem}[thm]{Remark}
\numberwithin{equation}{section}
\newcommand{\indexedforests}[1][]{ 
{\ifx&#1&%
    \operatorname{\mathsf{For}}
\else
    \operatorname{\mathsf{For}}^{#1}
\fi}
}
\newcommand{\forestpoly}[2][]{{\mathfrak{P}_{#2}^{\underline{#1}}}} 
\newcommand{\qsym}[2][]{
{\ifx&#1&%
  {\operatorname{QSym}_{#2}}
\else
  {{}^{#1}\!\operatorname{QSym}_{#2}}
\fi}
} 
\newcommand{\qsymide}[2][]{
{\ifx&#1&%
  {\operatorname{QSym}_{#2}^+}
\else
  {{}^{#1}\!\operatorname{QSym}_{#2}^+}
\fi}
} 
\newcommand{\lcode}[1]{\operatorname{lcode}(#1)} 
\newcommand{\supp}{\operatorname{supp}} 
\newcommand{\compatible}[2][]{
{\ifx&#1&%
  {\mathcal{C}(#2)}
\else
  {\mathcal{C}^{m}(#2)}
\fi
}}
\newcommand{\internal}[1]{\operatorname{IN}(#1)} 
\newcommand{\suchthat}{\;|\;}
\newcommand{\nvect}{\mathsf{Codes}}
\newcommand{\poly}{\operatorname{Pol}} 
\newcommand{\schub}[1]{\mathfrak{S}_{#1}} 
\newcommand{\red}[1]{\operatorname{Red}(#1)} 
\newcommand{\des}[1]{\operatorname{Des}(#1)} 
\date{}
\newcommand{\idem}{\operatorname{id}} 
\newcommand{\slide}[2][]{
{\ifx&#1&%
  {\mathfrak{F}_{#2}}
\else
  {\mathfrak{F}_{#2}^{\underline{#1}}}
\fi}
} 
\newcommand{\ct}{\operatorname{ev}_{0}} 
\newcommand{\qdes}[2][]{\operatorname{LTer}_{#1}(#2)} 
\newcommand{\tope}[2][]{
{\ifx&#1&%
  {\mathsf{T}_{#2}}
\else
  {\mathsf{T}_{#2}^{\underline{#1}}}
\fi}
} 
\newcommand{\builder}[2][]{
{\ifx&#1&%
  {\mathsf{B}_{#2}}
\else
  {\mathsf{B}_{#2}^{\underline{#1}}}
\fi}
} 
\newcommand{\rope}[1]{\mathsf{R}_{#1}} 
\newcommand{\dope}[2][]{
{\ifx&#1&%
  {\mathsf{D}_{#2}}
\else
  {\mathsf{D}_{#2}^{\underline{#1}}}
\fi}
} 
\newcommand{\zope}{\mathsf{Z}} 
\newcommand{\mzope}[1]{\zope^{\underline{#1}}}
\newcommand{\End}{\operatorname{End}} 
\newcommand{\Trim}[1]{\operatorname{Trim}({#1})} 
\newcommand{\Th}[1][]{\mathsf{ThMon}^{\underline{#1}}} 
\newcommand{\sfx}{\mathsf{x}} 
\newcommand{\sfc}{\mathsf{c}}
\newcommand{\qseq}[2][]{
{\ifx&#1&%
  {\operatorname{QSeq}_{#2}}
\else
  {{}^{#1}\!\operatorname{QSeq}_{#2}}
\fi}
}
\newcommand{\fac}[1]{\operatorname{Fac}(#1)}
\newcommand{\last}[1]{\operatorname{Last}(#1)}
\newcommand{\pd}[1]{\operatorname{PD}(#1)} 
\newcommand{\diagrams}[1]{\operatorname{Diag}(#1)} 
\newcommand{\ddpair}{divided difference pair }
\newcommand{\newddpair}{dd-pair }
\newcommand{\winc}{\mathsf{WInc}} 
\title{Schubert polynomial expansions revisited}
\author{Philippe Nadeau}
\address{Universite Claude Bernard Lyon 1, CNRS, Ecole Centrale de Lyon, INSA Lyon, Université Jean Monnet, ICJ UMR5208, 69622 Villeurbanne, France}
\email{\href{mailto:nadeau@math.univ-lyon1.fr}{nadeau@math.univ-lyon1.fr}}
\author{Hunter Spink}
\address{Department of Mathematics,
University of Toronto, Toronto, ON M5S 2E4, Canada}
\email{\href{mailto:hunter.spink@utoronto.ca}{hunter.spink@utoronto.ca}}
\author{Vasu Tewari}
\address{Department of Mathematical and Computational Sciences, University of Toronto Mississauga, Mississauga, ON L5L 1C6, Canada}
\email{\href{mailto:vasu.tewari@utoronto.ca}{vasu.tewari@utoronto.ca}}
\thanks{
PN was partially supported by French ANR grant ANR-19-CE48-0011 (COMBIN\'E). HS and VT acknowledge the support of the Natural Sciences and Engineering Research Council of Canada (NSERC), respectively [RGPIN-2024-04181] and [RGPIN-2024-05433].}
\keywords{Divided differences, forest polynomials, pipe dreams, Schubert polynomials, slide polynomials}
\begin{document}

\begin{abstract}
We give an elementary approach utilizing only the divided difference formalism for obtaining expansions of Schubert polynomials  that are manifestly nonnegative, by studying solutions to the equation $\sum Y_i\partial_i=\idem$ on polynomials with no constant term. This in particular recovers the pipe dream and slide polynomial expansions.
We also show that slide polynomials satisfy an analogue of the divided difference formalisms for Schubert polynomials and forest polynomials, which gives a simple method for extracting the coefficients of slide polynomials in the slide polynomial decomposition of an arbitrary polynomial.
\end{abstract}

\maketitle

\section{Introduction}

Let $S_{\infty}$ denote the set of permutations of $\{1,2,\ldots\}$ with finite support, and let $\ell(w)$ denote the length of a permutation, the length of the smallest word in the simple transpositions $s_i=(i,i+1)$ which equals $w$. The nil-Coxeter monoid is the right-cancelative partial monoid whose elements are permutations in $S_{\infty}$, equipped with the partial monoid structure
\begin{align}\label{eqn:Sinftypartial}u\circ v=\begin{cases}uv&\text{if }\ell(u)+\ell(v)=\ell(uv)\\\text{undefined}&\text{otherwise.}\end{cases}\end{align}
There is a permutation $w/i$ such that $w=(w/i)\circ i$ if and only if $i$ is in the descent set $\des{w}=\{j\suchthat w(j)>w(j+1)\}$, in which case it is unique and given by the formula $w/i=ws_i$.
An important representation of the nil-Coxeter monoid is the divided difference representation on integral polynomials, which sends $s_i$ to the $i$'th divided difference operator $\partial_i$ given by the formula
\begin{align}
\label{eqn:partiali}\partial_i(f)=\frac{f-f(x_1,\ldots,x_{i-1},x_{i+1},x_i,\ldots)}{x_i-x_{i+1}}.\end{align}
The Schubert polynomials $\{\schub{w}\suchthat w\in S_{\infty}\}$ of Lascoux--Sch\"utzenberger \cite{LS82,LS87} are a family of polynomials indexed by permutations $w$ in $S_{\infty}$, characterized by the normalization condition $\schub{\idem}=1$, and the relations $$\partial_i\schub{w}=\begin{cases}\schub{w/i}&\text{if }i\in \des{w}\\0&\text{otherwise.}\end{cases}$$ 
Despite their relatively simple definition, Schubert polynomials are complicated combinatorial objects. 
Many combinatorial formulas for Schubert polynomials exist, such as the algorithmic method of Kohnert~\cite{As22, Kohnert1991}
, the pipe dreams of Bergeron--Billey~\cite{BerBil93} and Fomin--Kirillov \cite{FK96}, the slide expansions of Billey--Jockusch--Stanley~\cite{BJS93} and Assaf--Searles \cite{AssSea17}, the balanced tableaux of Fomin--Greene--Reiner--Shimozono~\cite{FoGrReSh1997balanced}, the bumpless pipe dreams of Lam--Lee--Shimozono~\cite{LamLeeShi21}, and the prism tableau model of Weigandt--Yong~\cite{WY18}.

Expansions of Schubert polynomials have been almost exclusively studied from a ``top-down'' perspective -- for $w_{0,n}$ the longest permutation in $S_n$ one checks the conjectured formula agrees with the Ansatz $\schub{w_{0,n}}=x_1^{n-1}x_2^{n-2}\cdots x_{n-1}$, and then verifies the conjectured formula transforms correctly under applications of $\partial_i$. 
It seems the approaches to studying Schubert formulae that are ``bottom-up''  are rather limited. 
They fall into a broad class of results revolving around Pieri rules \cite{Sot96} (containing Monk's rule \cite{Mo59} as a special case)  expanding the product of $\schub{w}$ with elementary and complete homogenous symmetric polynomials  via the $k$-Bruhat order \cite{BS98} to establish relations between Schubert polynomials related by nonadjacent transpositions \cite[\S 3]{LS85}.
Another approach, relying on the geometry of Bott--Samelson varieties, is due to Magyar \cite{Mag98} and it builds Schubert polynomials by interspersing isobaric divided differences with multiplications by terms of the form $x_1\cdots x_i$ (cf. \cite{MSStD22} for a generalization to Grothendieck polynomials using  combinatorial tools).

In this paper we develop a new general method for finding combinatorial expansions of Schubert polynomials which works from the bottom-up, by directly reconstructing a Schubert polynomial $\schub{w}$ from the collection of Schubert polynomials $\schub{ws_i}$ for $i\in \des{w}$.

We demonstrate here our technique on a simpler toy example, where we recover the family of normalized monomials $\{S_c=\frac{\sfx^c}{c!}\coloneqq \frac{x_1^{c_1}\cdots x_\ell^{c_\ell}}{c_1!\cdots c_\ell!}\suchthat c=(c_1, \dots, c_\ell)\}$ using only the indirect information that they are homogenous with $S_\emptyset=1$ and satisfy
\begin{align}\label{eqn:ddxdualrecursion}
\frac{d}{dx_i}S_c=\begin{cases}S_{c-e_i}&\text{if }c_i\ge 1\\0&\text{otherwise.}\end{cases}
\end{align}
where $c-e_i=(c_1,\ldots,c_{i-1},c_i-1,c_{i+1},\ldots,c_\ell)$.
Our technique is motivated by the Euler's famous theorem
$$\sum_{i=1}^\infty x_i\frac{d}{dx_i}f=kf$$
for $f$ a homogenous polynomial of positive degree $k$. Iteratively applying this identity shows that
$$\sum_{i_1,\ldots,i_k}x_{i_1}\cdots x_{i_k}\frac{d}{dx_{i_1}}\cdots \frac{d}{dx_{i_k}}f=k!\idem$$
on homogenous polynomials of degree $k$, and grouping together terms with the same derivatives applied to $f$ shows that
$$\sum_{(c_1,\dots,c_\ell)}\frac{\sfx^c}{c!}\left(\frac{d}{dx_1}\right)^{c_1}\cdots \left(\frac{d}{dx_k}\right)^{c_\ell}=\mathrm{id}.$$
Applying this identity to $S_c$ shows that $S_c=\frac{\sfx^c}{c!}$ as desired. 
Notably this calculation does not use the Ansatz that the family of polynomials we are seeking are monomials.

Let $\poly\coloneqq \ZZ[x_1,x_2,\dots]$, and let $\poly^+\subset \poly$ denote the ideal of polynomials with no constant term.
Our method relies on finding degree $1$ ``creation operators'' $Y_1,Y_2,\ldots$ that solve the equation
$$\sum_{i=1}^{\infty} Y_i\partial_i=\idem$$
on $\poly^+$. 
Applying this equation to a Schubert polynomial and recursing gives an expansion
$$\sum_{(i_1,\ldots,i_k)\in \red{w}}Y_{i_k}\cdots Y_{i_1}(1)=\schub{w},$$
where $\red{w}$ is the set of reduced words for $w$. In particular, if each $Y_i$ is a monomial nonnegative operator then this produces a monomial nonnegative expansion of $\schub{w}$. 
Given the simplicity, we now show that Schubert polynomials have a nonnegative monomial  expansion using this technique by producing one such family of creation operators (this later appears as \Cref{subsec:partialcreation}, we will produce an additional family in \Cref{subsec:Di_applications}). 
Define the map
$$\rope{i}(f)=f(x_1,\ldots,x_{i-1},0,x_i,x_{i+1},\ldots).$$
Then $$\idem=\rope{1}+(\rope{2}-\rope{1})+(\rope{3}-\rope{2})=\rope{1}+\sum_{i=1}^{\infty}x_i\rope{i}\partial_i.$$
Moving $\rope{1}$ to the other side and noting that $\id-\rope{1}$ is invertible on polynomials with no constant term with inverse $\zope=\idem+\rope{1}+\rope{1}^2+\cdots$, we conclude that
$$\sum \zope x_i\rope{i}\partial_i=\mathrm{id}.$$
Applying this to $\schub{w}$ immediately gives the following.
\begin{thm}[Corollary~\ref{cor:ZxRcreation}]\label{th:main_1}
We have the following monomial positive expansion
    $$\schub{w}=\sum_{(i_1,\ldots,i_k)\in \red{w}}\zope x_{i_k}\rope{i_k}\cdots \zope x_{i_1}\rope{i_1}(1).$$
\end{thm}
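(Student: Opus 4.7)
The operator identity $\sum_i \zope x_i \rope{i} \partial_i = \idem$ on $\poly^+$ is essentially already derived in the paragraph preceding the statement, so the corollary will follow by iterated application to $\schub{w}$; the plan is to make each step rigorous. First, I would check that the telescoping identity $\idem = \rope{1} + \sum_{i \ge 1}(\rope{i+1} - \rope{i})$ on $\poly^+$ is well-defined: any fixed $f$ involves only finitely many variables, say $x_1,\ldots,x_N$, and then $\rope{i+1}(f) = \rope{i}(f) = f$ for $i > N$, so the sum truncates and telescopes to $\rope{N+1}(f) = f$. Next, I would verify the key identity $\rope{i+1} - \rope{i} = x_i \rope{i}\partial_i$ by direct substitution into the definition of $\partial_i$: evaluating $\partial_i f$ at $(x_1,\ldots,x_{i-1},0,x_i,x_{i+1},\ldots)$ produces numerator $\rope{i}(f) - \rope{i+1}(f)$ and denominator $-x_i$, and multiplying by $x_i$ gives the claim (and incidentally shows $\rope{i+1}(f) - \rope{i}(f)$ is divisible by $x_i$).

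Second, I would observe that $\rope{1}$ is locally nilpotent on $\poly^+$: iterating $\rope{1}$ strictly decreases the number of variables used in each monomial, so $\rope{1}^N$ annihilates any polynomial in $x_1,\ldots,x_N$. Hence $\idem - \rope{1}$ is invertible on $\poly^+$ with inverse $\zope = \sum_{k \ge 0} \rope{1}^k$, a sum which is finite when applied to any given polynomial. Rearranging the telescoping identity and composing with $\zope$ yields $\sum_i \zope x_i \rope{i}\partial_i = \idem$ on $\poly^+$.

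Finally, I would apply this identity to $\schub{w}$, which lies in $\poly^+$ for $w \ne \idem$ since it is homogeneous of positive degree. Using $\partial_{i_1}\schub{w} = \schub{ws_{i_1}}$ if $i_1 \in \des{w}$ and $0$ otherwise, the first application gives $\schub{w} = \sum_{i_1 \in \des{w}} \zope x_{i_1}\rope{i_1}\schub{ws_{i_1}}$. Iterating on $\schub{ws_{i_1}}$ and continuing until we reach $\schub{\idem} = 1$, which occurs after exactly $\ell(w)$ steps, produces a sum over sequences $(i_1, \ldots, i_k)$ with $k = \ell(w)$, $i_j \in \des(ws_{i_1}\cdots s_{i_{j-1}})$ for every $j$, and $ws_{i_1}\cdots s_{i_k} = \idem$. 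Such sequences are in bijection with reduced words for $w$ via reversal $(i_1,\ldots,i_k) \mapsto (i_k,\ldots,i_1) \in \red{w}$, which reindexes the sum into the stated formula. The only delicate point is ensuring the infinite operator sums are well-defined on $\poly^+$, handled by the local-finiteness observations above.
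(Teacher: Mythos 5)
Your proposal is correct and follows essentially the same route as the paper: the paper proves $x_i\rope{i}\partial_i=\rope{i+1}-\rope{i}$ (Lemma~\ref{le:xr_equals_id}), telescopes to $\sum_i x_i\rope{i}\partial_i=\idem-\rope{1}$, composes with $\zope$ to get creation operators (Corollary~\ref{cor:ZxRcreation}), and then invokes the general recursion argument (Proposition~\ref{prop:creation_implies_stuff}); you simply inline that last step by iterating directly on $\schub{w}$ and carrying out the reversal bookkeeping on reduced words explicitly. One small imprecision: your justification that $\rope{1}$ is locally nilpotent on $\poly^+$ says that iterating $\rope{1}$ ``strictly decreases the number of variables used in each monomial,'' which is not literally true — $\rope{1}$ either annihilates a monomial (if $x_1$ divides it) or shifts every variable index down by one, leaving the count of distinct variables unchanged. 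The correct observation is that the maximal index of a surviving monomial strictly decreases under each shift, so a nonconstant monomial supported on $x_1,\dots,x_N$ is killed after at most $N$ applications; the conclusion, and hence your invertibility of $\idem-\rope{1}$ on $\poly^+$, stands.
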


We generalize these ideas to a more general situation $(X,M)$ we call a ``\ddpair'' (\newddpair henceforth), in which the compositions of degree $-1$ polynomial endomorphisms $X_1,X_2,\ldots$, given by ``shifts'' of a fixed endomorphism $X$, form a representation of a right-cancelable partial graded monoid $M$ generated in degree $1$. 
Writing $\last{w}$ for the analogue of the descent set of $w$, we will say that a family of polynomials $\{S_w\suchthat w\in M\}$ is ``dual'' to the \newddpair if it satisfies the normalization condition $S_1=1$ and
    $$X_iS_w=\begin{cases}S_{w/i}&\text{ if }i\in \last{w}\\0&\text{otherwise.}\end{cases}$$
It is then natural to ask the following.
\begin{enumerate}
    \item Assuming there is such a family of polynomials $\{S_w\suchthat w\in M\}$, can we write down a formula for $S_w$?
    \item Does such a family of polynomials exist in the first place?
\end{enumerate}
These questions came up naturally from our previous paper \cite{NST_1} for the operators $$\tope[m]{i}(f)=\frac{f(x_1,\ldots,x_{i-1},x_i,0^m,x_{i+1},\ldots)-f(x_1,\ldots,x_{i-1},0^m,x_i,x_{i+1},\ldots)}{x_i}$$ called ``$m$-quasisymmetric divided difference operators''. There we had to essentially guess (via computer assistance) a formula for the family of $m$-forest polynomials, and then through a tedious and unenlightening computation \cite[Appendix]{NST_1} show that they interact in the expected way with the $\tope[m]{i}$ operators.

The analogue of creation operators $Y_i$ such that $\sum Y_iX_i=\idem$ on polynomials with no constant term can be used to solve the first question analogously as for Schubert polynomials, and we find such operators for $m$-forest polynomials without difficulty.

For the second question, we show surprisingly that if a \newddpair has creation operators, then the only additional thing that is needed to ensure that the dual family of polynomials exist is a ``code map'' $c:M\to \nvect$ from the partial monoid to finitely supported sequences of nonnegative integers, so that the highest index of a nonzero element of $c(m)$ is the maximal element of $\last{w}$. 
The Lehmer code of permutations works for the $\partial_i$ formalism, while the $m$-Dyck path forest code \cite[Definition 3.5]{NST_1} works for the $\tope[m]{i}$ formalism: this shows directly that Schubert polynomials and $m$-forest polynomials exist without any Ansatz or combinatorial model.

As a further application, we study the well-known family of polynomials called ``slide polynomials''  investigated in detail by Assaf--Searles \cite{AssSea17}; this family is also present in earlier works \cite{BJS93,Hi00} (see \cite{hicks2024quasisymmetric} for more on the relation to Hivert's foundational work).
Forest polynomials and Schubert polynomials decompose nonnegatively in terms of this family (see respectively \cite{NT_forest} and \cite{AssSea17,BJS93}). 
A slide polynomial is determined by a sequence of positive integers $(a_1,a_2,\ldots,a_k)$, and the distinct slide polynomials $\slide{a_1,\dots,a_k}$ are indexed by weakly increasing sequences $1\leq a_1\le a_{2}\le \cdots \le a_k$.
We construct a \newddpair for the operators $$\dope{i}(f)=\frac{f(x_1,\ldots,x_{i-1},x_i,0,0,\ldots)-f(x_1,\ldots,x_{i-1},0,x_i,0,\ldots)}{x_i}$$ whose compositions are governed by the partial monoid whose only relations are that $\dope{i}\dope{j}$ is undefined for $i>j$, such that the slide polynomials form the dual family of polynomials. 
This gives a fast and practical method for directly extracting coefficients of an arbitrary polynomial in the slide basis. 
Since fundamental quasisymmetric polynomials are a subfamily within slide polynomials, this generalizes \cite[Corollary 8.6]{NST_1}. 

\begin{thm}[\Cref{cor:slide_expansion_easy}]
The slide expansion of a degree $k$ homogenous polynomial $f\in \poly$ is given by
     \[
        f=\sum_{1\le i_1\le \cdots \le i_k} (\dope{i_1}\cdots\dope{i_k}f) \, \slide{i_1,\dots,i_k}.
     \]
\end{thm}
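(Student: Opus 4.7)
The plan is to deduce the formula from the dd-pair framework developed earlier in the paper, by combining the dual-family relation for slide polynomials with a standard biorthogonality argument.

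First, I would establish the biorthogonality relation
$$\dope{i_1}\dope{i_2}\cdots \dope{i_k}(\slide{j_1,\ldots,j_k}) = \delta_{(i_1,\ldots,i_k),(j_1,\ldots,j_k)}$$
for any two weakly increasing sequences $1 \leq i_1 \leq \cdots \leq i_k$ and $1 \leq j_1 \leq \cdots \leq j_k$. This follows by iterating the defining dual relation $\dope{i}\slide{w} = \slide{w/i}$ when $i \in \last{w}$ (and $0$ otherwise). In the partial monoid $M$ governing the $\dope{i}$ operators, a weakly increasing sequence $w = (a_1,\ldots,a_k)$ decomposes uniquely as $w = (a_1,\ldots,a_{k-1}) \circ (a_k)$, so $\last{w} = \{a_k\}$. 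Applying $\dope{i_k}$ first therefore kills $\slide{j_1,\ldots,j_k}$ unless $i_k = j_k$, in which case it returns $\slide{j_1,\ldots,j_{k-1}}$; inducting on $k$ gives the claim.

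Second, I would invoke the fact that the slide polynomials $\{\slide{j_1,\ldots,j_k} : 1 \leq j_1 \leq \cdots \leq j_k\}$ form a $\ZZ$-basis of the degree-$k$ homogeneous component of $\poly$ (in finitely many variables, for each fixed $n$), due to Assaf--Searles. Writing $f = \sum c_{(j_1,\ldots,j_k)} \slide{j_1,\ldots,j_k}$ and applying $\dope{i_1}\cdots \dope{i_k}$ to both sides, biorthogonality extracts $c_{(i_1,\ldots,i_k)} = \dope{i_1}\cdots \dope{i_k}(f)$, yielding the claimed expansion. Note that the sum has only finitely many nonzero terms, since each $\dope{i}$ vanishes on polynomials in $x_1,\ldots,x_{i-1}$ alone, so iterating $\dope{i_1}\cdots \dope{i_k}$ vanishes once $i_1$ exceeds the variable support of $f$.

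The main obstacle is step 1: setting up the dd-pair with slide polynomials as the dual family. This requires matching the operator algebra of the $\dope{i}$ with the abstract partial monoid structure (in which $\dope{i}\dope{j}$ is undefined for $i > j$), and verifying the dual property $\dope{i}\slide{w} = \slide{w/i}$ directly from the definition of slide polynomials. Once this structural identification is in place---as the paper constructs in the earlier section introducing this \newddpair---the biorthogonality-plus-basis argument sketched above delivers the theorem immediately.
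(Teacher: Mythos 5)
Your proposal is correct and follows essentially the same route as the paper. The paper deduces the formula by invoking \Cref{prop:dualpolyproperties}\ref{it:2.4.3} (which establishes precisely your biorthogonality relation, packaged as $\ct X_v S_w = \delta_{v,w}$, together with the reconstruction formula $f=\sum_w(\ct X_w f)S_w$) combined with the fact that the slide polynomials form a basis of $\poly$; you spell out the same biorthogonality computation by iterating the dual relation $\dope{i}\,\slide{b_1,\ldots,b_k}=\delta_{i,b_k}\slide{b_1,\ldots,b_{k-1}}$ from \Cref{thm:Di_and_slides} and cite Assaf--Searles for the basis fact, which the paper also accepts (and independently re-derives via \Cref{thm:creation_plus_code_equal_magic}). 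The step you flag as the ``main obstacle'' is exactly what \Cref{thm:Di_and_slides} supplies, so no gap remains.
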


Associated to the $\dope{i}$ are a new family of operators we call ``slide creators'' $\builder{i}$ that have the property that for any sequence $a_1,\ldots,a_k$ (not necessarily weakly increasing) we have
$$\slide{a_1,\ldots,a_k}=\builder{a_k}\cdots \builder{a_1}(1),$$
and
$$\sum \builder{i}\partial_i=\sum \builder{i} \tope{i}=\sum \builder{i}\dope{i}=\idem$$
on $\poly^+$,
i.e. they function as creation operators for Schubert polynomials, forest polynomials, and slide polynomials themselves simultaneously. 
Using these facts, we obtain the known slide polynomial expansions of Schubert and forest polynomials.

\begin{table}[!h]
    \centering
    \begin{adjustbox}{max width=\textwidth}
    \renewcommand{\arraystretch}{1.2}
    \begin{tabular}{|c|c|c|c|c|}
    \hline
        $\mathsection$&\textbf{Monoid}&\textbf{Divided differences}&\textbf{Dual polynomials}&\textbf{Creation operators}\\
        \hline
        \ref{sec:schubert}&Nil-Coxeter monoid $S_{\infty}$&$\partial_i$&Schubert polynomials $\schub{w}$&$\zope x_i\rope{i}$ and ($\mathsection$\ref{sec:Slide})$\builder{i}$\\
        \hline \ref{sec:Forest}&Thompson monoid $\Th[]$&$\tope{i}=\rope{i}\partial_i=\rope{i+1}\partial_i$&Forest polynomials $\forestpoly{F}$&$\zope x_i$ and ($\mathsection$\ref{sec:Slide}) $\builder{i}$\\    
        &$m$-Thompson monoid $\Th[m]$&$\tope[m]{i}=\tope{i}\rope{i+1}^{m-1}$&$m$-forest polynomials $\forestpoly[m]{F}$&$\mzope{m}x_i$ and ($\mathsection$\ref{sec:Slide}) $\builder[m]{i}$\\
        \hline \ref{sec:Slide}&Weakly increasing monoid $\winc$&$\dope{i}=\rope{i+1}^{\infty}\partial_i=\rope{i+1}^{\infty}\tope{i}$&Slide polynomials $\slide{\textbf{i}}$&$\builder{i}$\\
        &&$\dope[m]{i}=\rope{i+1}^{\infty}\tope[m]{i}$&$m$-slide polynomials $\slide[m]{\textbf{i}}$&$\builder[m]{i}$\\
        &&$\dope[\infty]{i}=\rope{i+1}^{\infty}\tope[\infty]{i}=\tope[\infty]{i}$&Monomials $x_{\textbf{i}}$&$\builder[\infty]{i}$\\
        \hline
    \end{tabular}
    \renewcommand{\arraystretch}{1}
    \end{adjustbox}
    \caption{Divided difference formalisms}    
\label{table:DDFormalisms}
\end{table}

\subsection{Outline of the paper}
See \Cref{table:DDFormalisms} for an overview of where we address each family of polynomials we consider in the paper. In \Cref{sec:ddform} we set up the notion of divided difference pairs, and study  creation operators and code maps. In \Cref{sec:schubert} we study Schubert polynomials. In \Cref{sec:Forest} we study forest polynomials, including $m$-forest polynomials. In \Cref{sec:Slide} we study slide polynomials and $m$-slide polynomials, which include monomials as a limiting case.

\subsection*{Acknowledgements}
We are very grateful to Dave Anderson, Sara Billey, Igor Pak, Greta Panova, Brendan Pawlowski, Richard Stanley, and Josh Swanson for enlightening discussions.

\section{Divided differences and creation operators}
\label{sec:ddform}

We describe a general framework which encodes the duality between $\partial_i$ and $\schub{w}$. In our framework the pair $(\partial,S_{\infty})$ will be called a \ddpair (\newddpair for short), and $\{\schub{w}\suchthat w\in S_{\infty}\}$ will be called a ``dual family of polynomials'' to this \newddpair. The two main mathematical insights are as follows.  \begin{enumerate}
    \item The existence of certain ``creation operators''  lead to explicit formulas for the dual polynomials, assuming the dual family of polynomials exist.
    \item Creation operators together with a ``code map'' shows that the dual polynomials exist,  without needing to verify any particular Ansatz or combinatorial model that  interacts well with the operators.
\end{enumerate}
These considerations are new and interesting even in the case of Schubert polynomials. For example, because we have the $\zope x\rope{}$ creation operators mentioned in the introduction, we will see in \Cref{sec:schubert} that the existence of the Lehmer code on permutations immediately implies that Schubert polynomials exist without any Ansatz or direct verification that the $\zope x\rope{}$ recursion interacts well with the $\partial_i$ operators.
In later sections we will apply this formalism to other families of polynomials.

\begin{rem}
\label{rem:ZZorQQ}
The operators and families of polynomials of interest to us in this paper have integer coefficients, so we will set everything up over $\ZZ$. This will exclude certain parts of the $\frac{d}{dx_i}$ example from the introduction because of the denominators present in the normalized monomials $S_c=\frac{\sfx^c}{c!}$. However, all of the theorems we have work equally well over $\QQ$, and we will indicate through this section how such modifications apply to this particular example. 
\end{rem}

\subsection{Partial monoids and polynomial representations}
We start by recalling some notions on partial monoids: these will encode the combinatorics of relations between families of operators.

A \textit{partial monoid} $M$ is a set equipped with a partial product map $M\times M \dashrightarrow M$ denoted by concatenation, together with a unit $1$, such that $1m=m1=m$ for all $m\in M$, and $m(m'm'')=(mm')m''$ for any $m,m',m''$, in the sense that either both products are undefined, or both are defined and equal.

\begin{rem} We have a monoid when the map is total, that is when products are always defined. Given a partial monoid $M$, one forms a monoid on the one-element extension $M\sqcup \{\mathbf{0}\}$ by setting $mm'=\mathbf{0}$ when the product is undefined in $M$, and if $m$ or $m'$ is $\mathbf{0}$. The notions of partial monoids and monoids with zero are thus essentially equivalent. 
\end{rem}

A \textit{polynomial representation} of $M$ is a map $\Phi:M\to \End(\poly)$  assigning an endomorphism of $\poly$ to each element of $M$ such that $\Phi(1)=\idem$, and such that for  $u,v\in M$ we have
$$\Phi(u)\Phi(v)=\begin{cases}\Phi(uv)&\text{if }uv\text{ is defined}\\0&\text{otherwise.}\end{cases}$$

A partial monoid $M$ is \textit{graded} if there is a length function $\ell:M\to \{0,1,2,\ldots\}$ such that $\ell(uv)=\ell(u)+\ell(v)$ whenever $uv$ is defined. We write $M_k\subset M$ for those elements of degree $k$. 
We always have $M_0=\{1\}$, and we write $M_1=\{a_i\}_{i\in I}$ for some indexing set $I$. If a graded partial monoid is generated in degree $1$, then the length $\ell(w)$ for $w\in M$ is the common length $k$ of all expressions $m=a_{i_1}\cdots a_{i_k}$. 
For such a partial monoid we write $\fac{w}$ for the set of $(i_1,\ldots,i_k)$ such that $w=a_{i_1}\cdots a_{i_k}$, and for $w\in M_k$ we write $\last{w}$ for the set of $i$ such that $w=w'a_i$ for some $w'\in M_{k-1}$. 
If such a $w'$ is always unique then we say furthermore that $M$ is \emph{right-cancelative}, and we denote this element by $w/i$. 
Finally, we say that such an $M$ has finite factorizations if we always have $|\fac{w}|<\infty$ (or equivalently if we always have $|\last{w}|<\infty$).

\subsection{Divided difference pairs} 
\label{subsec:dd-pairs}

We now formalize the relationship between the divided difference operators $\partial_i$ and the partial monoid $S_{\infty}$ in what we call a ``divided difference pair'' (\newddpair). It is not our goal to give the most general results possible, but to have a formalism that encompasses all examples we want to treat while being possibly useful in other situations.\smallskip

We fix a polynomial endomorphism $X\in \End(\poly)$ that is of degree $-1$,  i.e. $X$ takes degree $d$ homogenous polynomials to degree $d-1$ homogenous polynomials for all $d$.

For any $i\geq 1$, we define the shifted operator $X_i\in \End(\poly)$ by the composition
$$X_i:\poly\cong \poly_{i-1}\otimes \poly \to \poly_{i-1}\otimes \poly \cong \poly$$
where the first and last isomorphisms are given by the isomorphism $$\poly_{i-1}\otimes \poly=\ZZ[x_1,\ldots,x_{i-1}]\otimes \ZZ[x_i,x_{i+1},\ldots]\cong \poly,$$ and the middle map is given by $\idem\otimes X$. In particular $X=X_1$ and we always have
\begin{align}
\label{eq:vanishing}
    f\in \poly_n\implies X_{n+1}f=X_{n+2}f=\cdots=0,
\end{align}
since in this case $X$ acts on constants, and thus vanishes as it has degree $-1$.

\begin{eg}
    If we set $\partial\in \End(\poly)$ to be the first divided difference
    \begin{align}
\label{eqn:partial}\partial(f)=\frac{f(x_1,x_2,x_3,\ldots)-f(x_2,x_1,x_3,\ldots)}{x_1-x_2},\end{align}
    then $\partial_i$ agrees with \eqref{eqn:partiali}. 
\end{eg}

Note that $\partial$ is called the divided difference operator because the formula involves dividing a difference by a linear form. The way in which the various $X$ we consider in later sections arise will also be from taking two degree $0$ operators $A,B\in \End(\poly)$ such that $(A-B)f$ is always divisible by a linear form $L$, and then setting $X=\frac{A-B}{L}$.

Writing dd for divided difference, we call $X$ and the $X_i$ \textit{dd-operators} even if they do not necessarily arise in this way in general.

\begin{defn}
\label{defn:ddform}
    We define a \emph{divided difference pair} (or a dd-pair) to be the data of $(X,M)$ where $M$ is a graded right-cancelative partial monoid, generated in degree $1$ by $\{a_i\}_{i\geq 1}$, such that the map $a_i\mapsto X_i$ is a representation of $M$. 
 For $w\in M$ we write $X_w$ for the associated endomorphism of $\poly$, and in particular we have $X_i=X_{a_i}$.
\end{defn}
\begin{eg}
\label{eg:partial}
    If we set $M=S_{\infty}$ with its partial monoid structure given by \eqref{eqn:Sinftypartial}, $\partial$ as in \eqref{eqn:partial}, then the divided difference representation $s_i\mapsto \partial_i$ makes $(\partial,S_{\infty})$ into a \newddpair.
\end{eg}
\begin{eg}
    For any degree $-1$ polynomial endomorphism $X$ we have $(X,M)$ is a \newddpair for $M$ the free monoid on $\{1,2,\ldots\}$.
\end{eg}
\begin{eg}
\label{eg:derivatives}
$\nvect$ is a monoid via componentwise addition, and we have a representation given by $i\mapsto \frac{d}{dx_i}$ because $\frac{d}{dx_i}\frac{d}{dx_j}=\frac{d}{dx_j}\frac{d}{dx_i}$. Therefore $(\frac{d}{dx},\nvect)$ is a dd-pair and for $c=(c_1,\dots,c_k,0,\ldots)$ we have $\left(\frac{d}{dx}\right)_c=\left(\frac{d}{dx_1}\right)^{c_1}\cdots \left(\frac{d}{dx_k}\right)^{c_k}$.
\end{eg}

We are especially interested in the case where $M$ encodes all additive relations between compositions of the operators $X_i$. However this is a hard thing to show in general, so we do not want to assume it from the beginning. It will actually follow from the formalism we now introduce (see \Cref{thm:creation_plus_code_equal_magic}).

\subsection{Dual families of polynomials to a \newddpair}

We now generalize the relation between  $\schub{w}$ and the $\partial_i$ to an arbitrary \newddpair $(X,M)$.

\begin{defn}
\label{defn:dualpoly}
    A family $(S_w)_{w\in M}$ of homogenous polynomials in $\poly$ is \emph{dual} to a \newddpair $(X,M)$ if $S_{1}=1$, and for each $w\in M$ and $i\in \{1,2,\ldots\}$ we have \[
    X_iS_w=\begin{cases}S_{w/i}&\text{if }i\in \last{w}\\0&\text{otherwise.}\end{cases}
    \]
\end{defn}

\begin{eg}
    The Schubert polynomials $\{\schub{w}\suchthat w\in S_{\infty}\}$ are dual to the \newddpair $(\partial,S_{\infty})$.
\end{eg}
\begin{eg}
    If we had defined everything over $\QQ$ instead of $\ZZ$ then $\{\frac{\sfx^c}{c!}\suchthat c\in \nvect\}$ would be dual to to the \newddpair $(\frac{d}{dx},\nvect)$.
\end{eg}

The terminology is justified by item~\ref{it:2.4.3} of the following result.

\begin{prop}
\label{prop:dualpolyproperties}
    If a \newddpair $(X,M)$ has a dual family  $\{S_w\suchthat w\in M\}$, then
    \begin{enumerate}[align=parleft,left=0pt,label=(\arabic*)]
    \item \label{it:2.4.0} $M$ has finite factorizations.
    \item \label{it:2.4.1} The polynomials $S_w$ are $\ZZ$-linearly independent.
    \item \label{it:2.4.2} The representation of $\ZZ[M]$ is faithful:
    $$\sum  c_wX_w=0\implies c_w=0\text{ for all }w.$$
    In particular $M$ is the partial monoid of compositions generated by the operators $X_i$.
    \item \label{it:2.4.3} 
    Letting $\ct:\poly\to \ZZ$ be the map $f\mapsto f(0,0,\dots)$, 
    we have $\ct X_vS_{w}=\delta_{v,w}$. 
    As a consequence, for $f\in \ZZ\{S_w\suchthat w\in M\}$, the $\ZZ$-span of the $S_w$, we have
    \begin{align}
    \label{eqn:ctreconstruct}f=\sum_{w\in M}(\ct X_w f)S_w.\end{align}
\end{enumerate}
\end{prop}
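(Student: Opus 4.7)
The plan is to prove the four items slightly out of order. I would first establish the auxiliary fact that $S_w \neq 0$ for every $w \in M$, then deduce (1), then prove the orthogonality identity $\ct X_v S_w = \delta_{v,w}$ appearing at the start of (4), and finally derive (2), (3), and the reconstruction formula~\eqref{eqn:ctreconstruct} as quick corollaries.

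For nonvanishing I would induct on $\ell(w)$: the base case is $S_1 = 1$, and for the inductive step, if $S_w = 0$ with $\ell(w) \geq 1$, pick any factorization $w = w' a_i$ (which exists since $M$ is generated in degree~$1$); then $i \in \last{w}$, so $0 = X_i S_w = S_{w/i} = S_{w'}$, contradicting induction. For (1), fix $w$ and choose $n$ with $S_w \in \poly_n$; by~\eqref{eq:vanishing}, $X_i S_w = 0$ for all $i > n$, while $i \in \last{w}$ forces $X_i S_w = S_{w/i} \neq 0$. Hence $\last{w} \subseteq \{1, \ldots, n\}$, and $|\fac{w}| < \infty$ then follows by a second induction on $\ell(w)$, since every factorization of $w$ is obtained by picking some $i_k \in \last{w}$ (finitely many choices) and prepending a factorization of $w/i_k$.

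For the orthogonality identity $\ct X_v S_w = \delta_{v, w}$, I would induct on $\ell(v)$. The base case $v = 1$ reduces to $S_w(0, 0, \ldots) = \delta_{w, 1}$, which holds because $S_w$ is homogeneous of positive degree for $w \neq 1$. For the inductive step, write $v = v' a_i$ with $i \in \last{v}$, so $X_v = X_{v'} X_i$. If $i \notin \last{w}$, both sides vanish: the LHS via $X_i S_w = 0$, and the RHS because $v \neq w$ (as $i \in \last{v} \setminus \last{w}$). If instead $i \in \last{w}$, then by induction $\ct X_v S_w = \ct X_{v'} S_{w/i} = \delta_{v', w/i}$, and right-cancellativeness gives $v = w \iff v' = w/i$.

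The remaining items are then formalities. For (2), applying $\ct X_v$ to a purported relation $\sum c_w S_w = 0$ yields $c_v = 0$. For (3), if $\sum c_w X_w = 0$ as operators, applying both sides to $S_v$ and then $\ct$ yields $c_v = 0$; the ``in particular'' statement follows from the injectivity of $w \mapsto X_w$ encoded by this. For~\eqref{eqn:ctreconstruct}, writing $f = \sum c_u S_u$ in the span and applying $\ct X_w$ extracts $c_w = \ct X_w f$. The main obstacle is organizational rather than technical: one must resist attempting (2) or (3) directly and instead build the orthogonality pairing first, and one must recognize that the nonvanishing of $S_w$ -- easy to overlook -- is precisely what converts the polynomial-degree bound in (1) into a bound on $\last{w}$.
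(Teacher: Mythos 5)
Your proof is correct, and its internal organization differs from the paper's in two spots. For item~(1), the paper argues directly that only finitely many compositions $X_{i_1}\cdots X_{i_k}$ can be nonzero on $S_w$ (using that for any polynomial $f$ only finitely many $X_i$ have $X_if\ne 0$, and iterating $k$ times), whereas you first isolate the nonvanishing lemma $S_w\ne 0$ and then use it together with the degree bound~\eqref{eq:vanishing} to conclude $\last{w}\subseteq\{1,\ldots,n\}$ before inducting on length. For the orthogonality identity, the paper first observes the structural fact that $X_vS_w=S_u$ whenever $w=vu$ exists and $X_vS_w=0$ otherwise, and then just notes that $\ct S_u=\delta_{1,u}$ by homogeneity; you instead run an induction on $\ell(v)$, peeling one $X_i$ at a time and invoking right-cancellativity at the inductive step. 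Both arguments are sound and of comparable length; the paper's statement of $X_vS_w$ as a single closed-form case distinction is slightly more compact and is reused implicitly throughout, while your version makes explicit the (easy to overlook, as you say) nonvanishing of $S_w$ and the containment $\last{w}\subseteq\{1,\dots,n\}$, which are indeed the pivots that convert a degree bound into a finiteness statement. The deductions of (2), (3), (4), and~\eqref{eqn:ctreconstruct} from the orthogonality pairing are then identical in both treatments.
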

\begin{proof}
First, note that for $(i_1,\ldots,i_k)\in \fac{w}$ we have $X_{i_1}\cdots X_{i_k}S_w=S_1=1$. Now we know that for any polynomial $f$ there are only finitely many $X_i$ such that $X_if\ne 0$. Applying this repeatedly we see there are only finitely many sequences $(i_1,\ldots,i_k)$ such that $X_{i_1}\cdots X_{i_k}S_w\ne 0$. Therefore $|\fac{w}|<\infty$ and \ref{it:2.4.0} is proved.

The defining relations for $S_w$ imply that $X_vS_w=S_u$ if there exists a $u\in M$ (necessarily unique by right-cancelability) such that $w=vu$, and $0$ otherwise. 
Since $S_u$ is homogenous of degree $\ell(u)$, we have $\ct S_u=\delta_{1,u}$, so $\ct X_vS_w=\delta_{v,w}$, establishing the first part of \ref{it:2.4.3}. 
This implies that the linear functionals $\{\ct X_v\suchthat v\in M\}$ are dual to the family of polynomials $\{S_w\suchthat w\in M\}$, so the polynomials $\{S_w\suchthat w\in M\}$ are linearly independent and the linear functionals $\{\ct X_w\suchthat w\in M\}$ are linearly independent, establishing \ref{it:2.4.1} and \ref{it:2.4.2}. Finally, for $f$ in the $\ZZ$-span of the $S_w$, if we write $f=\sum b_vS_v$ then applying $\ct X_w$ to both sides shows $b_w=\ct X_w f$ which implies the reconstruction formula \eqref{eqn:ctreconstruct}.
\end{proof}

\begin{eg}
    We give an example of a dd-pair whose dual family does not span $\poly$.
    Let $\partial'=\partial_2$. For the \newddpair $(\partial',S_{\infty})$ where $s_i\mapsto (\partial')_i=\partial_{i+1}$, for each $\lambda\in \ZZ$ we can construct a dual family of polynomials $S_{w}^{(\lambda)}=\schub{w}(\lambda x_1+x_2,\lambda x_1+x_3,\ldots)$.
    For no $\lambda$ does this family of polynomials span $\poly$ since $x_1$ is not in the span of the linear polynomials.
\end{eg}

\begin{eg}
    The analogue of the above theorem still holds if we had used $\QQ$ instead of $\ZZ$ in our setup. In this case, the existence of the dual family of monomials $\frac{\sfx^c}{c!}$ to the \newddpair $(\frac{d}{dx},\nvect)$ shows that the representation of $\nvect$ is faithful, and \eqref{eqn:ctreconstruct} recovers the Taylor expansion of any rational polynomial $f$: 
    $$f=\sum_c \left(\ct \left(\frac{d}{dx}\right)_c f\right)\frac{\sfx^c}{c!}.$$
\end{eg}

\subsection{Creation operators and code maps}

Given a \newddpair, an outstanding remaining question is whether they do admit a dual family of polynomials $S_w$. 
We give an answer in several cases of interest, using the existence of certain ``creation operators''.

\begin{defn}
    We define \emph{creation operators} for the operator $X$ to be a collection of degree $1$ polynomial endomorphisms $Y_i\in \End(\poly)$ such that on the ideal $\poly^+\subset \poly$, we have the identity
    \begin{align}\label{eq:creation_operators}
    \sum_{i=1}^{\infty} Y_iX_i=\mathrm{id}.
    \end{align}
We will also say that a \newddpair $(X,M)$ has creation operators when the operator $X$ has.
\end{defn}

Note that the left-hand side of~\eqref{eq:creation_operators} is well defined thanks to~\eqref{eq:vanishing}. 

\begin{rem}
    Note that the left-hand side of~\eqref{eq:creation_operators} vanishes on $\ZZ$, so the identity extends uniquely to $\poly$ by subtracting $\ct$ from the right-hand side, i.e. it reads $\sum_{i=1}^{\infty} Y_iX_i=\idem-\ct.$
\end{rem}

\begin{prop}\label{prop:creation_implies_stuff}
    If a \newddpair $(X,M)$ has creation operators $Y_i$ and a family of dual polynomials $\{S_w\suchthat w\in M\}$, then for $w\in M$ we have
    \begin{align}
    \label{eq:Sw_via_Yi}
    S_w=\sum_{(i_1,\dots,i_k)\in \fac{w}} Y_{i_k}\cdots Y_{i_1}(1).
    \end{align}
\end{prop}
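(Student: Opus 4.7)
The plan is to argue by induction on $k = \ell(w)$, driven directly by the creation operator identity.

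For the base case $k=0$, we have $w=1$, the set $\fac{1}$ consists only of the empty sequence, and the empty composition of $Y_i$'s applied to $1$ gives $1 = S_1$, so both sides agree.

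For the inductive step, assume $k \geq 1$ and that the formula is established for all elements of length less than $k$. Since $S_w$ is homogeneous of degree $k \geq 1$, it lies in $\poly^+$, so we may apply the creation operator identity \eqref{eq:creation_operators} to obtain
\begin{align*}
S_w = \sum_{i=1}^{\infty} Y_i X_i S_w.
\end{align*}
Note that this sum is well-defined (in fact finite): by \eqref{eq:vanishing}, $X_i S_w = 0$ for all $i$ larger than the number of variables occurring in $S_w$. Now invoke the duality property from \Cref{defn:dualpoly}: $X_i S_w$ equals $S_{w/i}$ when $i \in \last{w}$ and vanishes otherwise. Since $\last{w}$ is finite by \Cref{prop:dualpolyproperties}\ref{it:2.4.0}, this reduces the identity to
\begin{align*}
S_w = \sum_{i\in \last{w}} Y_i S_{w/i}.
\end{align*}
Apply the induction hypothesis to each $S_{w/i}$, which has length $k-1$:
\begin{align*}
S_w = \sum_{i\in \last{w}} Y_i \sum_{(i_1,\dots,i_{k-1})\in \fac{w/i}} Y_{i_{k-1}}\cdots Y_{i_1}(1).
\end{align*}

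The final step is to recognize that the combined index set matches $\fac{w}$. Because $M$ is generated in degree $1$ and is right-cancelative, every factorization $w = a_{i_1}\cdots a_{i_k}$ is obtained uniquely by choosing $i_k \in \last{w}$ (namely $i_k = i$) and then a factorization $w/i = a_{i_1}\cdots a_{i_{k-1}}$. This gives a bijection
\begin{align*}
\fac{w} \;\longleftrightarrow\; \bigsqcup_{i\in \last{w}} \fac{w/i} \times \{i\},
\end{align*}
under which the double sum rewrites as $\sum_{(i_1,\dots,i_k)\in \fac{w}} Y_{i_k}\cdots Y_{i_1}(1)$, completing the induction.

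There is no substantive obstacle here: everything follows from unwinding the definitions, with the only subtleties being to ensure the infinite sum in \eqref{eq:creation_operators} truncates (handled by \eqref{eq:vanishing}) and to confirm the bookkeeping bijection on factorizations (handled by right-cancelability and the degree-$1$ generation of $M$). The content of the statement is essentially that the creation operators invert the divided differences on the span of the dual family, so iterating once per letter of a factorization reconstructs $S_w$.
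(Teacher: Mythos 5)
Your proof is correct and follows essentially the same argument as the paper: induct on length, apply the creation-operator identity on $\poly^+$, use duality to pass to $\sum_{i\in\last{w}} Y_i S_{w/i}$, invoke the inductive hypothesis, and regroup the double sum via the bijection $\fac{w}\leftrightarrow\bigsqcup_{i\in\last{w}}\fac{w/i}\times\{i\}$. You spell out the finiteness and bijection bookkeeping a little more explicitly than the paper does, but there is no difference in substance.
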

\begin{proof}
$M$ has finite factorizations by \Cref{prop:dualpolyproperties}, so the right-hand side in~\eqref{eq:Sw_via_Yi} is well defined. 
To prove it, we induct on the length $k=\ell(w)$. 
For $k=0$  this is the identity $S_{1}=1$ and for $k>0$ we have
\begin{align*}
S_w=
\sum_{i=1}^{\infty}Y_iX_iS_w=\sum_{i\in \last{w}}Y_iS_{w/i}
&=\sum_{i\in \last{w}}\sum_{(i_1,\dots,i_{k-1})\in \fac{w/i}}Y_iY_{i_{k-1}}\cdots Y_{i_1}(1)
\nonumber\\
&=\sum_{(i_1,\dots,i_k)\in \fac{w}}Y_{i_k}\cdots Y_{i_1}(1).\qedhere
\end{align*}
\end{proof}

An immediate consequence is that if a \newddpair has creation operators, it has at most one dual family of polynomials. The creation operators are not unique in general, and this leads to possibly distinct expansions of $S_w$ as we will see in later sections.

\begin{eg}
\label{eg:ddxcreator}
If we had used $\QQ$ instead of $\ZZ$ in our setup, then for $(\frac{d}{dx},\nvect)$, we can take $Y_i$ to act on homogenous polynomials of degree $k$ by $Y_i(f)=\frac{1}{k+1}x_if$ for all $k$. 
Then \eqref{eq:creation_operators} holds as it is Euler's famous theorem $\sum x_i\frac{d}{dx_i}=k\idem$ on homogenous polynomials of positive degree $k$. 
For $c=(c_i)_{i\geq 1}\in \nvect$, we have $\fac{c}=\{(i_1,\ldots,i_k)\suchthat 
 c_p=\#\{1\leq j\leq k\suchthat i_j=p\}\}$, and~\eqref{eq:Sw_via_Yi} recovers the formula $S_c=\frac{\sfx^c}{c!}$ for the unique candidate family of polynomials satisfying \eqref{eqn:ddxdualrecursion}.
\end{eg}

 Let us give an example now to show that the existence of creation operators is not enough to ensure the existence of a dual family of polynomials.

\begin{eg}
 Define $X$ by linearly extending the assignments $X(x_i)=\delta_{i,1}$ for all $i\ge 1$, and some degree $-1$ injection $\Phi$ on monomials of degree $d$ to monomials of degree $d-1$ for each $d\ge 2$. We can assume that $x_1$ does not occur in the range of $\Phi$, by applying the shift $x_i\mapsto x_{i+1}$ if necessary. $X$ has the following creation operators $Y_i$: on the constant polynomials, $Y_i$ is multiplication by $x_i$. On $\poly^+$, define $Y_2=Y_3=\cdots=0$ while $Y_1$ equals $\Phi^{-1}$ on monomials in the range of $\Phi$, and $0$ on the remaining monomials.
 
    If $(S_w)_{w\in M}$ is dual to some \newddpair $(X,M)$, we have $S_{a_1}=x_1$. Now $a_1\cdot a_1$ is defined in $M$ since $X_1^2=X^2$ is nonzero, and we have $X_1(S_{a_1\cdot a_1})=x_1$. This is not possible by our assumption on $\Phi$, and thus $(X,M)$ does not have a dual family.
\end{eg}

We now give a simple to check hypothesis on $M$ to ensure that the dual polynomials do in fact exist and, furthermore, form a basis of $\poly$.
   
Let $\nvect$ denote the set of finitely supported sequences of nonnegative integers $c=(c_1,c_2,\ldots)$. 
For $c\in \nvect$, write $\supp c$ for the set of $i$ such that $c_i\neq 0$, and $|c|$ for the sum of the nonzero entries. Let $M$ be a graded right-cancelable monoid.

\begin{defn}
\label{def:codemap}
    A \emph{code map} for $M$ is an injective map $c:M\to \nvect$  such that $\ell(w)=|c(w)|$ and $\max \supp c(w)=\max \last{w}$ for all $w\in M$. (In particular $M$ has finite factorizations.)
\end{defn}
We note that the existence of a code map is trivially seen to be equivalent to the condition that
$$\#\{w\in M\suchthat \ell(w)=n\text{ and }\max \last{w}=k\}\le \#\{c\in \nvect\suchthat |c(w)|=n\text{ and }\max\supp c(w)=k\}$$
but in practice verifying code maps exist seems to be more straightforward than checking this inequality by other means.

\begin{thm}
\label{thm:creation_plus_code_equal_magic}
    Suppose that a \newddpair $(X,M)$ has creation operators and a code map. Then 
    \begin{enumerate}[align=parleft,left=0pt,label=(\arabic*)]
        \item \label{they_exist:it1} The code map is bijective.
        \item \label{they_exist:it2} There is a unique dual family $(S_w)_{w\in M}$ defined by \eqref{eq:Sw_via_Yi}. It is a basis of $\poly$.
        \item \label{they_exist:it3} The subfamily $(S_w)_w$ where $\max \supp c(w)\le d$ is a basis of $\poly_d$ for any $d\geq 0$.
    \end{enumerate}
\end{thm}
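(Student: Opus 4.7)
My plan is to define $S_w$ by formula~\eqref{eq:Sw_via_Yi} and derive all three claims from two ingredients: an iterated version of the creation operator equation yielding a spanning formula, together with the vanishing $X_Df=0$ for $f\in\poly_d$, $D>d$ and a dimension count supplied by the code map. I would first verify that $S_w$ is well-defined by inducting on $\ell(w)$: any $(j_1,\dots,j_k)\in\fac{w}$ has $j_k\in\last{w}\subseteq\{1,\dots,\max\supp c(w)\}$ (finite by the code map), and removing $j_k$ gives a factorization of $w/j_k$, finite by induction. Iterating $\sum_i Y_iX_i=\idem-\ct$ terminates after $\deg f$ steps, and grouping the resulting tuples by the element of $M$ they encode under the representation property yields the reconstruction formula
\[
f = \sum_{w\in M}(\ct X_w f)\,S_w,\qquad f\in\poly,
\]
a finite sum, which in particular shows that $\{S_w\}_{w\in M}$ spans $\poly$.

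The key observation compatible with the filtration $\poly_d\subset\poly$ is: if $D:=\max\last{w}>d$, then writing $w=(w/D)\cdot a_D$ gives $X_w=X_{w/D}X_D$, so $X_wf=X_{w/D}(X_Df)=0$ for $f\in\poly_d$. Hence for $f\in(\poly_d)_k$, the reconstruction formula only involves $w\in W_{d,k}:=\{w\in M\suchthat \ell(w)=k,\,\max\supp c(w)\le d\}$, giving $(\poly_d)_k\subseteq\operatorname{span}\{S_w\suchthat w\in W_{d,k}\}$. On the other hand, by code map injectivity, $|W_{d,k}|\le \#\{c\in\nvect\suchthat |c|=k,\,\max\supp c\le d\}=\binom{d+k-1}{k}=\operatorname{rank}_{\ZZ}(\poly_d)_k$. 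Combined with spanning, all inequalities become equalities: the code map restricts to a bijection $W_{d,k}\to\{c\suchthat |c|=k,\,\max\supp c\le d\}$, and the spanning set $\{S_w\}_{w\in W_{d,k}}$ of cardinality equal to the $\ZZ$-rank of the free module $(\poly_d)_k$ is automatically a $\ZZ$-basis. Taking unions over $d$ and $k$ yields \ref{they_exist:it1}, the basis claim of \ref{they_exist:it2}, and \ref{they_exist:it3}.

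For duality and uniqueness in \ref{they_exist:it2}, using the newly established basis property, applying $\ct X_v$ to the basis expansion of $f=S_w$ yields $\ct X_vS_w=\delta_{v,w}$. Then $X_iS_w=\sum_v(\ct X_vX_iS_w)S_v=\sum_v\delta_{va_i,w}S_v$ (using $X_vX_i=X_{va_i}$, or $0$ when $v\cdot a_i$ is undefined) equals $S_{w/i}$ if $i\in\last{w}$ and $0$ otherwise, establishing duality. Uniqueness of the dual family follows by applying item~\ref{it:2.4.3} of \Cref{prop:dualpolyproperties} to any such family $\{\tilde S_w\}$ to obtain $\ct X_v\tilde S_w=\delta_{v,w}$, then substituting into the reconstruction formula to deduce $\tilde S_w=S_w$. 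The main conceptual obstacle is isolating the restriction mechanism $X_Df=0$ for $D>d$ and $f\in\poly_d$; once recognized, all of \ref{they_exist:it1}--\ref{they_exist:it3} fall out from a single dimension count per graded piece.
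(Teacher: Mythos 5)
Your overall strategy mirrors the paper's: define $S_w$ via the creation operators, derive the reconstruction identity $f=\sum_w(\ct X_wf)S_w$, restrict it to $\poly_d^{(k)}$ using the vanishing $X_Df=0$ for $f\in\poly_d$, $D>d$, and then run a dimension count against the code map. The derivation of $\ct X_vS_w=\delta_{v,w}$ and the consequent duality and uniqueness also track the paper closely.

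There is, however, a genuine gap in your treatment of item~\ref{they_exist:it3} (and of the bijectivity/basis step it rests on). You establish the containment
\[
\poly_d^{(k)}\subseteq \ZZ\{S_w\suchthat w\in W_{d,k}\}
\]
and the count $|W_{d,k}|=\operatorname{rank}_\ZZ\poly_d^{(k)}$, and from this you conclude that ``the spanning set $\{S_w\}_{w\in W_{d,k}}$ of cardinality equal to the $\ZZ$-rank of $(\poly_d)_k$ is automatically a $\ZZ$-basis.'' But nothing you have said shows that $S_w\in\poly_d^{(k)}$ for $w\in W_{d,k}$. The operators $Y_i$ are arbitrary degree-$1$ endomorphisms and may introduce variables beyond $x_d$, so a priori $\ZZ\{S_w\}$ could strictly contain $\poly_d^{(k)}$; equal ranks do not force equality of a containment of free $\ZZ$-modules (compare $2\ZZ\subset\ZZ$). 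What the dimension count does give you is the linear independence of the $S_w$, and hence item~\ref{they_exist:it1} and the basis claim in item~\ref{they_exist:it2} (spanning of all of $\poly$ follows from the reconstruction formula with no containment issue). To get item~\ref{they_exist:it3} you need the additional observation the paper flags explicitly: $\poly_d^{(k)}$ is a saturated (pure) $\ZZ$-submodule of $\poly^{(k)}$, being spanned by a subset of the monomial basis, so a containment of equal-rank free abelian groups $\poly_d^{(k)}\subseteq\ZZ\{S_w\}$ inside $\poly^{(k)}$ must be an equality; equivalently, tensoring with $\QQ$ gives $S_w\in\poly_d^{(k)}\otimes\QQ$, and since $S_w$ has integer coefficients, $S_w\in\poly_d^{(k)}$. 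Once this is supplied, the rest of your argument goes through.
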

\begin{proof}
Define recursively $S_1=1$ and 
\[
S_w=\sum_{i\in \last{w}}Y_iS_{w/i}.
\]
By \Cref{prop:creation_implies_stuff}, the dual family of polynomials must be equal to $\{S_w\suchthat w\in M\}$ if it exists.

We begin by addressing~\ref{they_exist:it1}.
Let $$M_{k,d}=\{w\in M\suchthat \ell(w)=k\text{ and }\max\supp c(w)\le d\}.$$
We claim that for $f\in \poly_d^{(k)}$, the homogenous degree $k$ polynomials in $\poly_d$, we have
\begin{align}
\label{eqn:Mkdequality}
    f=\sum_{w\in M_{k,d}}X_w(f)S_w.
\end{align}
By induction on $k$ we can show \eqref{eqn:Mkdequality} but with $w\in M_{k,d}$ replaced with the condition $\ell(w)=k$ since
$$f=\sum_{i=1}^{\infty} Y_iX_if=\sum_{i=1}^{\infty}Y_i\sum_{\ell(w')=k-1}(X_{w'}X_if)S_{w'}=\sum_{\ell(w)=k}\sum_{i\in \last{w}}Y_i(X_w(f)S_{w/i})=\sum_{\ell(w)=k}X_w(f)S_w.$$
To conclude, it suffices to show that if $\ell(w)=k$ and $w\not\in M_{k,d}$ then $X_wf=0$; this is true because if $i=\max \supp c(w)>d$, then $i\in \last{w}$ and so $X_wf=X_{w/i}X_if=0.$

Writing $\nvect_{k,d}=\{c\in \nvect \suchthat \max\supp c\le d\text{ and }|c(w)|=k\}$, the code map induces an injection $M_{k,d}\to \nvect_{k,d}$ so $|M_{k,d}|\le |\nvect_{k,d}|$. 
On the other hand,~\eqref{eqn:Mkdequality} implies the inclusion \begin{align}\label{eqn:polydkcontainment}\poly_d^{(k)}\subset \ZZ\{S_w\suchthat w\in M_{k,d}\},\end{align} so $|\nvect_{k,d}|=\operatorname{rank} \poly_d^{(k)}\le |M_{k,d}|$. 
We conclude that $|M_{k,d}|=|\nvect_{k,d}|=\operatorname{rank} \poly_d^{(k)}$, implying~\ref{they_exist:it1} and the fact the $S_w$ are $\ZZ$-linearly independent.

Observe that~\eqref{eqn:polydkcontainment} is a containment of equal rank free abelian groups.
Furthermore $\poly_d^{(k)}$ is saturated (i.e. for any $\lambda\in \ZZ$ we have $\lambda f\in \poly_d^{(k)}$ implies $f\in \poly_d^{(k)}$), so the containment \eqref{eqn:polydkcontainment} is in fact an equality and we conclude that $\{S_w\suchthat w\in M_{k,d}\}$ is a $\ZZ$-basis of $\poly_d^{(k)}$. 
Taking the union of these bases for all $k$ and fixed $d$ shows that $\{S_w\suchthat \max \supp c(w)\le d\}$ is a basis for $\poly_d$, which shows ~\ref{they_exist:it3}.

By considering these basis statements and the identity \eqref{eqn:Mkdequality} for growing $d$, and using the fact that $\bigcup M_{k,d}=M$, we deduce that $\{S_w\suchthat w\in M\}$ is a basis of $\poly$, proving the second half of~\ref{they_exist:it2}. For arbitrary $f\in \poly$ we have the identity
$$f=\sum_{w\in M}(\ct X_wf)S_w.$$
We thus infer that
\begin{enumerate}[label=(\alph*)]
    \item \label{first point} If $\ct X_wf=0$ for all $w\in M$ then $f=0$, and
    \item \label{second point} $S_w$ is the unique polynomial such that $\ct X_{w'}S_w=\delta_{w',w}$ for all $w'\in M$.
\end{enumerate}
We are now ready to show that $X_iS_w=\delta_{i\in \last{w}}S_{w/i}$ for any $i$ and $w$. If $w'\in M$, we have
$$\ct X_{w'}(X_iS_w)=\ct X_{w'\cdot i}S_w=\delta_{w,w'\cdot i}.$$
Here the last two terms are considered as zero if $w'\cdot i$ is not defined. If $i\not \in \last{w}$ then $\delta_{w,w'\cdot i}=0$ for all $w'\in M$ so we conclude by~\ref{first point} that $X_iS_w=0$. 
On the other hand, if $i\in \last{w}$ then $\delta_{w,w'\cdot i}=\delta_{w/i,w'}$ which by~\ref{second point} implies $X_iS_w=S_{w/i}$ as desired.
\end{proof}

\begin{eg}
 For $(\frac{d}{dx},\nvect)$ there is a code map on $\nvect$ given by the identity. Therefore using $\QQ$ instead of $\ZZ$ in our setup, we can conclude that $\{S_c=\frac{\sfx^c}{c!}\suchthat c\in \nvect\}$ found in \Cref{eg:ddxcreator} is the dual family of polynomials to $(\frac{d}{dx},\nvect)$ without directly verifying the recursion \eqref{eqn:ddxdualrecursion}.
\end{eg}

\section{Schubert polynomials}
\label{sec:schubert}

The divided difference $\partial_i\in \End(\poly)$ for $i=1,2,\dots$ is defined as follows:
\begin{align*}
    \partial_if(x_1,x_2,\ldots)&=\frac{f-f(x_1,\dots,x_{i-1},x_{i+1},x_{i},\ldots)}{x_i-x_{i+1}}.
\end{align*}
The partial monoid $M$ is given by the nil-Coxeter monoid $S_{\infty}$ of permutations of $\{1,2,\ldots\}$ fixing all but finitely many elements with partial product $u\circ v=uv$ if $\ell(u)+\ell(v)=\ell(uv)$, undefined otherwise: here $\ell$ and $uv$ are the lengths and product in the group $S_{\infty}$. 
Denoting the simple transposition $s_i=(i,i+1)$, the corresponding \newddpair $(\partial,S_{\infty})$ comes from the representation $s_i\mapsto \partial_i$.

We have $$\last{w}=\des{w}=\{i\suchthat w(i)>w(i+1)\},$$
and $\fac{w}=\red{w}$, the set of reduced words for $w$, i.e. the set of sequences $(i_1,\ldots,i_k)$ with $k=\ell(w)$ such that $w=s_{i_1}\cdots s_{i_k}$. 
The Lehmer code is the bijective map $S_{\infty}\to \nvect$ defined for $w\in S_{\infty}$ by $\lcode{w}=(c_1,c_2,\ldots)$ where $c_i=\#\{j>i\suchthat w(i)>w(j)\}$. 
Because $\des{w}=\{i\suchthat c_i>c_{i+1}\}$, we have $\max\supp \lcode{w}=\max\last{w}$, so this is a code map as in \Cref{def:codemap}.

The Schubert polynomials are the unique family of homogenous polynomials dual to the \newddpair $(\partial, S_{\infty})$: we have $\schub{\idem}=1$ and
$$\partial_i\schub{w}=\begin{cases}\schub{w/i}& \text{if }i\in \des{w}\\0&\text{otherwise.}\end{cases}$$
Figure~\ref{fig:applying_dels} shows the application of various divided difference operators starting from $\schub{1432}$.
\begin{figure}[!ht]
    \centering
    \includegraphics[width=\textwidth]{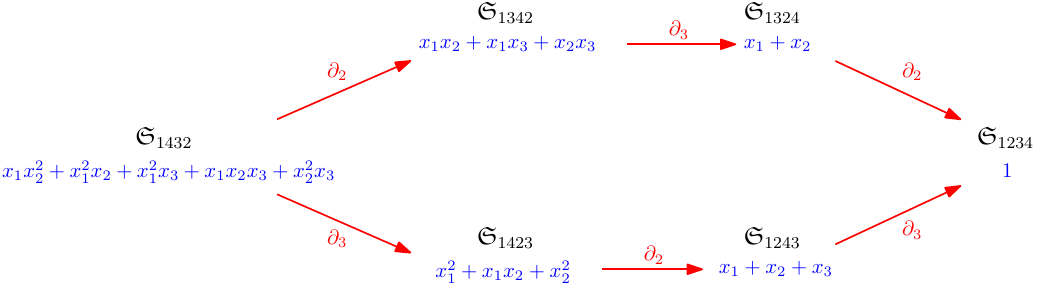}
    \caption{Sequences of $\partial_i$ applied to a  $\schub{w}$}
    \label{fig:applying_dels}
\end{figure}

The standard way the existence of Schubert polynomials is shown is through the Ansatz
$\schub{w_{0,n}}=x_1^{n-1}x_2^{n-2}\cdots x_{n-1}$
for $w_{0,n}$ the longest permutation in $S_n$. Because every $u\in S_{\infty}$ has $u\le w_{0,n}$ for some $n$, it turns out it suffices to check that $\partial_{w_{0,n-1}^{-1}w_{0,n}}x_1^{n-1}x_2^{n-2}\cdots x_{n-1}=x_1^{n-2}x_2^{n-3}\cdots x_{n-2}$, which is done with direct calculation. 

Using our setup, because there is a code map we can simultaneously avoid the Ansatz and establish an explicit combinatorial formula by exhibiting creation operators for the $\partial_i$.

\subsection{Creation operators for $\partial_i$}
\label{subsec:partialcreation}

We now describe creation operators for $\partial_i$, which will give formulas for the Schubert polynomials. We define the \emph{Bergeron--Sottile map} \cite{BS98}
$$\rope{i}f(x_1,x_2,\ldots)=f(x_1,\ldots,x_{i-1},0,x_i,\ldots).$$
\begin{lem}\label{le:xr_equals_id}
We have
    \begin{align*}
        \sum_{i\geq 1} x_i\rope{i}\partial_i=
        \idem-\rope{1}.
    \end{align*}
\end{lem}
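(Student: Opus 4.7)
The plan is to prove the identity in two steps: first establish a telescoping identity involving the Bergeron--Sottile maps $\rope{i}$, then verify each term of the telescope equals the corresponding term $x_i \rope{i} \partial_i$. This strategy is foreshadowed in the introduction of the paper, where the identity $\idem = \rope{1} + (\rope{2}-\rope{1}) + (\rope{3}-\rope{2}) + \cdots$ is mentioned.

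The telescoping step is straightforward. For any polynomial $f \in \poly$ involving only variables $x_1,\ldots,x_n$, the operator $\rope{k}$ for $k > n$ simply inserts a $0$ at position $k$, which does not affect $f$; hence $\rope{k} f = f$ for all sufficiently large $k$. Thus the series
\[
\sum_{i=1}^{\infty} (\rope{i+1} - \rope{i})
\]
has only finitely many nonzero terms on any given polynomial, and telescopes to $\lim_{k \to \infty} \rope{k} - \rope{1} = \idem - \rope{1}$.

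The main calculation is showing $x_i \rope{i} \partial_i = \rope{i+1} - \rope{i}$ for each $i$. Starting from the formula for $\partial_i f$, substitute the argument of $\rope{i}$, i.e. $(x_1,\ldots,x_{i-1},0,x_i,x_{i+1},\ldots)$, into $\partial_i f$. Writing the $y_i, y_{i+1}$ of the difference-quotient definition becomes $0$ and $x_i$ respectively, so the denominator $y_i - y_{i+1}$ becomes $-x_i$. After clearing signs, one obtains
\[
\rope{i} \partial_i f = \frac{f(x_1,\ldots,x_{i-1},x_i,0,x_{i+1},\ldots) - f(x_1,\ldots,x_{i-1},0,x_i,x_{i+1},\ldots)}{x_i},
\]
and multiplying through by $x_i$ identifies the numerator as $(\rope{i+1} - \rope{i}) f$ directly from the definition of $\rope{i+1}$ and $\rope{i}$.

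Combining the two steps yields $\sum_{i \geq 1} x_i \rope{i} \partial_i = \sum_{i \geq 1} (\rope{i+1} - \rope{i}) = \idem - \rope{1}$. I do not anticipate a serious obstacle: the only step that requires care is carefully tracking the positional substitutions in the calculation of $\rope{i} \partial_i f$, in particular making sure the sign from $0 - x_i$ in the denominator is handled correctly so the telescope comes out with the stated orientation.
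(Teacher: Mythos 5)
Your proof is correct and follows the same route as the paper, which simply sums the relation $x_i\rope{i}\partial_i=\rope{i+1}-\rope{i}$ over $i\geq 1$; you additionally spell out the substitution check behind that relation and the finiteness of the telescope, both of which are handled correctly.
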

\begin{proof}
    We sum the relation $x_i\rope{i}\partial_i=\rope{i+1}-\rope{i}$ for all $i\geq 1$.
\end{proof}

We define $$\zope=\idem+\rope{1}+\rope{1}^2+\cdots:\poly^+\to \poly^+.$$

\begin{cor}
\label{cor:ZxRcreation}
We have that $\zope x_i\rope{i}$ are creation operators for the \newddpair given by the usual divided differences $\partial_i$ and the nil-Coxeter monoid. That is, the identity
    $$\sum_{i\ge 1}\zope x_i\rope{i}\partial_i=\idem$$
    holds on $\poly^+$.
    In particular, Schubert polynomials exist and we have the following monomial positive expansion
    $$\schub{w}=\sum_{(i_1,\ldots,i_k)\in \red{w}}\zope x_{i_k}\rope{i_k}\cdots \zope x_{i_1}\rope{i_1}(1).$$
\end{cor}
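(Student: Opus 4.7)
\medskip

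\noindent\textbf{Plan.} The statement has two ingredients: the operator identity $\sum_{i\ge 1}\zope x_i\rope{i}\partial_i=\idem$ on $\poly^+$, and the consequence that this makes $\{\zope x_i\rope{i}\}_{i\ge 1}$ a family of creation operators to which \Cref{thm:creation_plus_code_equal_magic} applies.

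\medskip

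\noindent\textbf{Step 1: The key operator identity.} I would first verify that $\zope$ is a well-defined endomorphism of $\poly^+$, and that it is the two-sided inverse of $\idem-\rope{1}$ on $\poly^+$. The point is that for $f\in \poly^+$ with $f$ involving only the variables $x_1,\ldots,x_n$, each application of $\rope{1}$ both shifts the variables down and substitutes $0$ into the first slot, so $\rope{1}^n f$ ends up being the value $f(0,0,\ldots)=0$. Hence the geometric series defining $\zope$ is a finite sum when applied to any particular $f\in \poly^+$, and the usual telescoping identity $\zope(\idem-\rope{1})=(\idem-\rope{1})\zope=\idem$ holds on $\poly^+$.

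Next, starting from \Cref{le:xr_equals_id}, namely $\sum_{i\ge 1}x_i\rope{i}\partial_i=\idem-\rope{1}$ on $\poly$, I apply both sides to an arbitrary $f\in \poly^+$; the left-hand side lies in $\poly^+$ because every term carries a factor $x_i$, and the right-hand side $f-\rope{1}f$ also lies in $\poly^+$. Applying $\zope$ from the left and using $\zope(\idem-\rope{1})=\idem$ on $\poly^+$ then yields
\[
\sum_{i\ge 1}\zope x_i\rope{i}\partial_i\,f=f\qquad\text{for all }f\in \poly^+,
\]
which is the desired creation-operator identity.

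\medskip

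\noindent\textbf{Step 2: Existence of Schubert polynomials and the formula.} Each $\zope x_i\rope{i}$ raises degree by exactly $1$ (since $\rope{i}$ preserves degree, multiplication by $x_i$ raises by one, and $\zope$ preserves degree), so they qualify as creation operators for the \newddpair $(\partial,S_{\infty})$ in the sense of the paper. Combined with the Lehmer code, which is exhibited as a code map for $S_{\infty}$ in the opening paragraph of \Cref{sec:schubert}, \Cref{thm:creation_plus_code_equal_magic} immediately gives that the dual family of polynomials $\{\schub{w}\}_{w\in S_{\infty}}$ exists and forms a basis of $\poly$. The explicit formula
\[
\schub{w}=\sum_{(i_1,\ldots,i_k)\in\red{w}}\zope x_{i_k}\rope{i_k}\cdots \zope x_{i_1}\rope{i_1}(1)
\]
is then the content of \Cref{prop:creation_implies_stuff} applied to this \newddpair, using that $\fac{w}=\red{w}$.

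\medskip

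\noindent\textbf{Step 3: Monomial positivity.} Finally, I would observe that each of the building blocks is manifestly monomial nonnegative: $\rope{i}$ sends a monomial to a monomial (or zero) by shifting indices, $x_i$ is multiplication by a monomial, and $\zope$ expands as a sum $\sum_{j\ge 0}\rope{1}^j$ each of whose summands is again monomial nonnegative. The composition $\zope x_{i_k}\rope{i_k}\cdots \zope x_{i_1}\rope{i_1}$ therefore takes $1$ to a sum of monomials with nonnegative integer coefficients, establishing the monomial positivity claim. There is no real obstacle: the only subtle point is the convergence/well-definedness of $\zope$ on $\poly^+$ in Step 1, which is handled by the finite-variable observation above.
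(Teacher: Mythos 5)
Your proof is correct and follows the paper's argument essentially verbatim: Step 1 is the paper's one-line computation $\zope\sum_i x_i\rope{i}\partial_i=\zope(\idem-\rope{1})=\idem$ with the well-definedness of $\zope$ on $\poly^+$ spelled out, and Steps 2--3 simply unpack the ``in particular'' via \Cref{thm:creation_plus_code_equal_magic}, \Cref{prop:creation_implies_stuff}, the Lehmer code, and the observation that $\rope{i}$, multiplication by $x_i$, and $\zope$ are each monomial nonnegative. No gaps; the extra detail is accurate but the route is the same.
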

\begin{proof} We compute $\zope\sum_{i\geq 1} x_i\rope{i}\partial_i=Z(\idem-\rope{1})=(\idem-\rope{1})+\rope{1}(\idem-\rope{1})+\cdots=\idem$.
\end{proof}

\begin{eg}\label{eg:schubert}
    Take $w=14253$ so that $\red{w}=\{324,342\}$. 
    Adopting the shorthand $\zope \textsf{x}\rope{\mathbf{i}}$ for composite $\zope x_{i_k}\rope{i_k}\cdots \zope x_{i_1}\rope{i_1}$ where $\mathbf{i}=(i_1,\dots,i_k)$ one gets
    \begin{align*}
    &\zope\textsf{x}\rope{(3,2,4)}(1)=
    \zope\textsf{x}\rope{(2,4)}(x_1+x_2+x_3)=
    \zope\textsf{x}\rope{(4)}(x_1x_2+x_1^2+x_2^2)=x_1x_2x_4+x_1^2x_4+x_1^2x_3+x_2^2x_4
    \\
   &\zope\textsf{x}\rope{(3,4,2)}(1)=
    \zope\textsf{x}\rope{(4,2)}(x_1+x_2+x_3)=
    \zope\textsf{x}\rope{(2)}(x_1x_2+x_1x_3+x_1x_4+x_2x_3+x_2x_4+x_3x_4)\\&\qquad\qquad\qquad\qquad\qquad\qquad\qquad\qquad=x_1x_2^2+x_1x_2x_3+x_2^2x_3+x_1^2x_2.
    \end{align*}
    On adding the two right-hand sides one obtains the Schubert polynomial $\schub{14253}$.
\end{eg}

\begin{rem}
    The slide expansion of Schubert polynomials \cite{AssSea17,BJS93}, reproved in \Cref{prop:slide_exp}, expresses $\schub{w}$ as a sum of slide polynomials over $\red{w}$. Corollary~\ref{cor:ZxRcreation} also provides an expression where the sum ranges over $\red{w}$, but these two decompositions are in fact distinct, as the preceding example reveals as neither $\zope\textsf{x}\rope{(3,2,4)}(1)$ nor $\zope\textsf{x}\rope{(3,4,2)}(1)$ equals a slide polynomial.
\end{rem}

\subsection{Pipe dream interpretation}
\label{subsec:pipedream}

We now relate the preceding results to a simple bijection at the level of pipe dreams. 
Consider the staircase $\textsf{Stair}_n\coloneqq (n,n-1,\dots,1)$ whose columns are labeled $1$ through $n$ left to right.
Given $w\in S_n$, a (reduced) pipe dream for $w$ is a tiling of $\textsf{Stair}_n$ using `cross' and `elbow' tiles depicted in Figure~\ref{fig:pd_tiles_example} so that the following conditions hold:
\begin{itemize}
    \item The tilings form $n$ pipes with the pipe entering in row $i$ exiting via column $w(i)$ for all $1\leq i\leq n$;
    \item No two pipes intersect more than once.
\end{itemize}

\begin{figure}[!ht]
    \centering
    \includegraphics{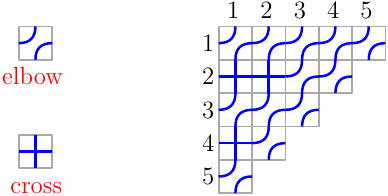}
    \caption{Elbow and cross tiles (left) and a pipe dream for $w=14253$ (right)} 
    \label{fig:pd_tiles_example}
\end{figure}

Denote the set of pipe dreams for $w$ by $\pd{w}$. Given $D\in \pd{w}$ attach the monomial 
\[
    \sfx^{D}\coloneqq \prod_{\text{crosses} (i,j)\in D} x_i.
\]
A famous result of Billey--Jockusch--Stanley \cite{BJS93} (see also \cite{BerBil93, FK96,FS94}) then states that
\begin{thm}
\label{thm:bjs}
    $\schub{w}$ is the generating polynomial for pipe dreams for $w$:
    \[
    \schub{w}=\sum_{D\in \pd{w}}\textsf{x}^{D}.
\]
\end{thm}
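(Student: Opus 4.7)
By \Cref{cor:ZxRcreation},
\[
    \schub{w} = \sum_{\mathbf{i} = (i_1,\ldots,i_k) \in \red{w}} \zope x_{i_k} \rope{i_k} \cdots \zope x_{i_1} \rope{i_1}(1),
\]
so the plan is to identify the right-hand side with $\sum_{D \in \pd{w}} \sfx^D$ via a weight-preserving bijection organized by reduced words. To each $D \in \pd{w}$ I would associate its \emph{column reading word} $\mathbf{i}(D)$, obtained by listing the anti-diagonals $r+c-1$ of the crossings of $D$ column by column from right to left, and top-to-bottom within each column. Transposing $D$ produces a pipe dream for $w^{-1}$ whose classical row reading is a reduced word for $w^{-1}$; reversing it shows $\mathbf{i}(D) \in \red{w}$. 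This partitions $\pd{w}$ by column reading word, reducing the theorem to the identity
\[
    \sum_{\substack{D \in \pd{w} \\ \mathbf{i}(D) = \mathbf{i}}} \sfx^D \ =\ \zope x_{i_k} \rope{i_k} \cdots \zope x_{i_1} \rope{i_1}(1) \qquad (\mathbf{i} \in \red{w}).
\]

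I would prove this identity by induction on $k$, tracking how each intermediate monomial encodes a placement of crossings. Identify each monomial $x_{r_1}\cdots x_{r_\ell}$ appearing after $\ell$ applications of the creation operators with the partial configuration whose crossings are at cells $(r_j, i_j-r_j+1)$. Under this identification, $\zope x_{i_{\ell+1}}\rope{i_{\ell+1}}$ performs the following moves: $\rope{i_{\ell+1}}$ slides each existing crossing in a row strictly greater than $i_{\ell+1}$ up-and-right by one step along its anti-diagonal, while annihilating monomials in which an existing crossing occupies row $i_{\ell+1}$; multiplication by $x_{i_{\ell+1}}$ inserts a new crossing at cell $(i_{\ell+1},1)$; and $\zope = \sum_{s\ge 0}\rope{1}^s$ further slides every crossing in the configuration up-and-right by a common $s \ge 0$ steps along anti-diagonals. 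Summed over $s$ and branches, this should enumerate exactly the pipe dreams whose column reading word extends from $(i_1,\ldots,i_\ell)$ to $(i_1,\ldots,i_{\ell+1})$, each with multiplicity one.

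The main obstacle is this bijective claim: one must check that the annihilation condition in $\rope{i_{\ell+1}}$ precisely removes duplicate contributions that would otherwise arise from distinct shift sequences producing the same pipe dream, while the surviving branches together with the $\zope$ shifts cover all valid column-reading extensions exactly once. This can be settled by a case analysis on whether the new crossing lands in a column strictly leftward of all prior crossings or in the previously leftmost column but strictly below. Once established, summing the identity over $\mathbf{i}\in\red{w}$ and invoking \Cref{cor:ZxRcreation} delivers the pipe dream formula $\schub{w} = \sum_{D\in \pd{w}} \sfx^D$.
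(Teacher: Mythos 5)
Your approach is genuinely different from the paper's. The paper never unwinds the creation formula to a full sum over reduced words: it instead proves the \emph{one-step} identity
\[
\sum_{D\in \pd{w}}\sfx^D=\rope{1}\sum_{D\in \pd{w}}\sfx^D+\sum_{i\in \des{w}}x_i\rope{i}\sum_{D\in \pd{ws_i}}\sfx^D,
\]
which is exactly~\eqref{eq:what_phi_sees}, by partitioning $\pd{w}$ according to whether column~$1$ contains a cross and, if so, the row of its bottom cross, and exhibiting two explicit weight-preserving bijections $\Phi_0$ (shift all crosses southwest) and $\Phi_i$ (delete the bottom cross in column~$1$, shift rows $\geq i$ southwest). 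The uniqueness in \Cref{prop:creation_implies_stuff} then finishes the proof. This sidesteps entirely the need to define a reading word, to verify it is reduced, or to track shift histories.

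Your proposal instead fully iterates \Cref{cor:ZxRcreation} and tries to establish, per reduced word $\mathbf{i}$, a bijection between pipe dreams with column reading word $\mathbf{i}$ and the monomials produced by $\zope x_{i_k}\rope{i_k}\cdots\zope x_{i_1}\rope{i_1}(1)$. Conceptually this is the ``unwound'' version of the paper's recursion (the bottom cross of the leftmost column of $D$ is exactly the $i$-critical cross the paper removes first, so your reading word is the reverse order in which the paper's recursion removes crosses), so the approach is sound in principle. But the core of the proof---what you yourself flag as ``the main obstacle''---is only sketched, and it is precisely here that all the mathematical content lies: you need to show (i) that after each step the surviving configurations are genuine pipe dreams for the current permutation with the prescribed column reading word, with the new cross necessarily landing as the bottom entry of the leftmost occupied column after the $\zope$-shift; (ii) that the map from (prior configuration, shift $s$) to new configuration is a bijection onto its image; and (iii) that the annihilated terms are exactly those outside the image. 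On this last point your framing is slightly off: the vanishing of $\rope{i_{\ell+1}}$ on $x_{i_{\ell+1}}$ is not cleaning up ``duplicate contributions from distinct shift sequences''; it is precisely enforcing the image restriction of the inverse of the paper's $\Phi_{i_{\ell+1}}$, namely that the previous-stage pipe dream must have no cross in row $i_{\ell+1}$. Filling in (i)--(iii) carefully would essentially require re-deriving the bijections $\Phi_0,\Phi_i$, so you would not save any work relative to the paper's argument---and you would additionally need the transpose lemma to know the column reading word is reduced, plus a bookkeeping device to associate sequences of rows to monomials (rather than just monomials, since a monomial does not remember the pairing of rows with anti-diagonal indices). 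As written, the proof is incomplete.
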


We will give a simple proof, using the recursion
\begin{align}
\label{eq:what_phi_sees}
\schub{w}=\rope{1}\schub{w}+\sum_{i\in \des{w}}x_i\rope{i}\schub{ws_i}.
\end{align}
which follows immediately from \Cref{le:xr_equals_id} and the definition of Schubert polynomials.

\begin{proof}[Proof of \Cref{thm:bjs}]
We need to show
    $$\sum_{D\in \pd{w}}\sfx^D=\rope{1}\sum_{D\in \pd{w}}\sfx^D+\sum_{i\in \des{w}}x_i\rope{i}\sum_{D\in \pd{ws_i}}\sfx^D.$$
Say that a pipe dream $D\in \pd{w}$ is uncritical if there are no crosses in column $1$, and $i$-critical if the last cross in column $1$ is in row $i$. Denote $\pd{w}^0\subset \pd{w}$ for the set of uncritical pipe dreams, and $\pd{w}^i\subset \pd{w}$ for the set of $i$-critical pipe dreams.

Note that if $i\ge 1$ and $\pd{w}^i$ is nonempty, then $i\in \des{w}$ since pipes $i$ and $i+1$ cross at the location of this last cross in column $1$. 
Because $\pd{w}=\bigsqcup \pd{w}^i$, it suffices to show that
\begin{enumerate}[label=(\alph*)]
    \item \label{first of these points} $\sum_{D\in \pd{w}^0 }\sfx^D=\rope{1}\sum_{D\in \pd{w}}\sfx^D$ and
    \item \label{second of these points} for $i\in \des{w}$ we have $\sum_{D\in \pd{w}^i}\sfx^D=x_i\rope{i}\sum_{D\in \pd{ws_i}}\sfx^D$.
\end{enumerate}
To see~\ref{first of these points} we note there is a weight-preserving bijection $$\Phi_0:\pd{w}^0\to \{D\in \pd{w}\suchthat D\text{ has no crosses in row $1$}\},$$ given by shifting all crosses one unit diagonally southwest. 
Since $\sfx^D=\rope{1}x^{\Phi_0(D)}$, we have~\ref{first of these points}.

To see~\ref{second of these points}, we note there is a bijection $$\Phi_i:\pd{w}^i\to \{D\in \pd{ws_i}\suchthat \text{D has no crosses in row $i$}\}$$
obtained by turning the last cross in column $1$ into an elbow and then shifting all crosses in rows $i$ and below one unit diagonally southwest. 
See Figure~\ref{fig:the_Phi_map} for an illustration.
As $\sfx^{D}=x_i\rope{i}\sfx^{\Phi_i(D)}$, we have~\ref{second of these points}.
\end{proof}

\begin{figure}[!ht]
    \centering
    \includegraphics{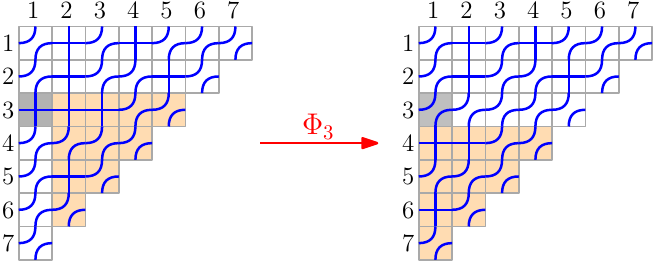}
    \caption{A $3$-critical pipe dream $D$ for $w=1375264$ (left), and $\Phi_3(D)\in \operatorname{PD}(ws_3)$ }
    \label{fig:the_Phi_map}
\end{figure}

\begin{rem}
    Since the image of $\partial_i$ comprises polynomials symmetric in $\{x_i,x_{i+1}\}$ we can replace the $\rope{i}\partial_i$ in Lemma~\ref{le:xr_equals_id} by $\rope{i+1}\partial_i$. The recursion in~\eqref{eq:what_phi_sees} is then equivalent to
    \begin{align}\label{eq:R_i_plus_one}
        \schub{w}=\rope{1}\schub{w}+\sum_{i\in \des{w}}x_i\rope{i+1}\schub{ws_i}.         
    \end{align}
    In private communication with the authors, Dave Anderson has sketched a representation-theoretic proof of the recursion in~\eqref{eq:R_i_plus_one} using Kra\'skiewicz--Pragacz modules \cite{KraPra1987foncteurs, KraPra2004Schubert}.
\end{rem}

\section{Forest polynomials}
\label{sec:Forest}

The \emph{quasisymmetric divided difference} \cite{NST_1} is defined as
\begin{align*}
    \tope{i}=\rope{i}\partial_i=\rope{i+1}\partial_{i}=\frac{\rope{i+1}-\rope{i}}{x_i}.
\end{align*}
The associated \newddpair $(\tope{},\indexedforests)$ from \cite{NST_1} comes from the monoid structure on the set $\indexedforests$ of plane indexed binary forests as we shall briefly recall.

A \emph{rooted plane binary tree} $T$ is a rooted tree with the property that every node has either no child, in which case we call it a \emph{leaf,} or two children, distinguished as the ``left'' and ``right'' child, in which case we call it an \emph{internal node}. 
We let $\internal{T}$ denote the set of internal nodes and let $|T|\coloneqq |\internal{T}|$ be the \emph{size} of $T$. 
The unique tree of size $0$, whose root node is also its leaf node, is denoted by $*$. 
We shall call this the \emph{trivial} tree. 

An \emph{indexed forest} $F$ is a sequence  $(T_i)_{i\geq 1}$ of rooted plane binary trees where all but finitely many $T_i$ are trivial.
If all $T_i$ are trivial, then we call $F$ the \emph{empty forest} $\varnothing$.
By identifying the leaves with $\ZZ_{\ge 1}$, going through them from left to right, one can depict an indexed forest  as shown in 
Figure~\ref{fig:binary_indexed_forest}.
We denote the set of indexed forests by $\indexedforests$.
Given $F\in \indexedforests$, we let $\internal{F}\coloneqq \sqcup_{i} \internal{T_i}$ denote its set of internal nodes. 

\begin{figure}[!ht]
    \centering
    \includegraphics[width=\textwidth]{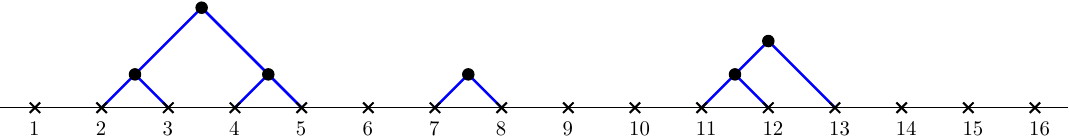}
    \caption{An indexed forest $F$ with $\sfc(F)=(0,2,0,1,0,0,1,0,0,0,2,0,0,\dots)$}
    \label{fig:binary_indexed_forest}
\end{figure}

There is a natural monoid structure on $\indexedforests$ obtained by taking $F\cdot G$ to be the indexed forest where the $i$'th leaf of $F$ is identified with the $i$'th root of $G$ for all $i$. This monoid is generated by the smallest nontrivial forests $\underline{i}$ of size $1$ with internal node having left leaf at $i$, and there is an identification of $\indexedforests$ with the (right-cancelable) Thompson monoid $\Th$ given by
$$\indexedforests \cong \Th=\langle 1,2,\cdots \suchthat i\cdot j=j\cdot (i+1)\text{ for }i>j\rangle,$$
by identifying $\underline{i}\mapsto i$.

We may encode $F\in \indexedforests$ as elements of $\nvect$ as follows.
Define $\rho_F:\internal{F}\to \ZZ_{\ge 1}$ by setting $\rho_F(v)$ equal to the label of the leaf obtained by going down left edges from $v$.
Then the map $\sfc:\indexedforests \to \nvect$ sending $F\mapsto \sfc(F)=(c_i)_{i\ge 1}$ where $c_i=\{v\suchthat \rho_F(v)=i\}$ is a bijection \cite[Theorem 3.6]{NST_1}.
The set $\last{w}$ is identified with the \emph{left terminal set} of $F$ as
\[
    \qdes{F}=\{i\suchthat c_i\neq 0\text{ and }c_{i+1}=0\},
\]
which in particular immediately implies that $\max \supp c(F)=\max \last{F}$ so $\sfc$ is a code map.
We explain the choice of name. 
We call $v\in \internal{F}$  \emph{terminal} if both its children are leaves, necessarily $i$ and $i+1$ where $i\coloneqq \rho_F(v)$. 
We then have $c_i\neq 0$ and $c_{i+1}=0$, i.e. $i\in \qdes{F}$.
Thus we can record terminal nodes by recording the label of their left leaf, which is what $\qdes{F}$ does.

For $F\in\indexedforests$ and $i\in \qdes{F}$, we call $F/i \in \indexedforests$ the \emph{trimmed forest}  (as in \cite[\S 3.6]{NST_1}), which is obtained by deleting the terminal node $v$ satisfying $\rho_F(v)=i$.
The set of factorizations $\fac{F}$ is then identified with the set of \emph{trimming sequences} \cite[Definition 3.8]{NST_1}:
\[
    \Trim{F}=\{(i_1,\dots,i_k)\suchthat (((F/i_k)/i_{k-1})/\cdots)/i_1 =\varnothing \}.
\]
Figure~\ref{fig:trimmingexample} (ignoring the polynomials in blue) shows repeated trimming operator applied to the indexed forest $F$ on the left. It follows that $\Trim{F}=\{(1,1,3),(1,2,1)\}$.

\begin{figure}[!ht]
    \centering    
    \includegraphics[width=\textwidth]{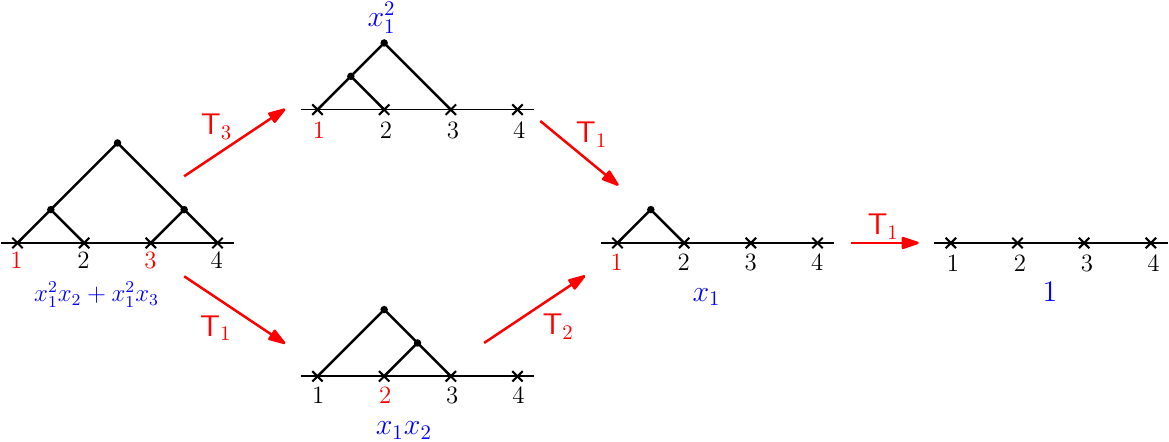}
    \caption{$\tope{i}$ applied to various $\forestpoly{F}$, with elements of $\qdes{F}$ highlighted in red}
    \label{fig:trimmingexample}
\end{figure}
\subsection{Creation operators for $\tope{i}$}
We now describe creation operators for $\tope{i}$.
\begin{thm}\label{th:creation_characterization}
    We have $\sum_{i\ge 1}\zope x_i\tope{i}=\idem$ on $\poly^+$, or in other words $\zope x_i$ are creation operators for $\tope{i}$. In particular, there is a family of ``forest polynomials'' $\forestpoly{F}$ characterized by $\forestpoly{\emptyset}=1$ and
    $$\textsf{T}_i\forestpoly{F}=\begin{cases}\forestpoly{F/i}&i\in \qdes{F}\\0&\text{otherwise,}\end{cases}$$
    with the following monomial-positive expansion
    $$\forestpoly{F}=\sum_{(i_1,\ldots,i_k)\in \Trim{F}}\zope x_{i_k}\cdots \zope x_{i_1} (1).$$
\end{thm}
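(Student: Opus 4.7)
The plan is to mirror the proof of \Cref{cor:ZxRcreation}: produce creation operators by a telescoping argument, then invoke \Cref{thm:creation_plus_code_equal_magic} together with the code map $\sfc$ recalled at the start of this section.

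First I would verify the creation operator identity $\sum_{i\ge 1}\zope x_i \tope{i} = \idem$ on $\poly^+$. From the definition $\tope{i}=\frac{\rope{i+1}-\rope{i}}{x_i}$ one reads off the operator identity $x_i\tope{i}=\rope{i+1}-\rope{i}$, so for any $f\in\poly^+$ in the variables $x_1,\ldots,x_n$ the telescoping sum gives $\sum_{i\ge 1}x_i\tope{i}(f)=(\idem-\rope{1})f$, using that $\rope{j}f=f$ for $j>n$. The operator $\zope=\sum_{k\ge 0}\rope{1}^k$ is well-defined on $\poly^+$ because $\rope{1}^k f=f(0,\ldots,0,x_1,x_2,\ldots)$ with $k$ zeros vanishes as soon as $k>n$ (since $f$ has no constant term), and the same telescope shows it is the inverse of $\idem-\rope{1}$ there; composing yields $\sum_{i\ge 1}\zope x_i\tope{i}=\zope(\idem-\rope{1})=\idem$ as required. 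Alternatively one may quote \Cref{le:xr_equals_id} directly, since $\tope{i}=\rope{i}\partial_i$.

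Second I would invoke \Cref{thm:creation_plus_code_equal_magic}. Its hypotheses have already been verified in this section: $(\tope{},\indexedforests)$ is a \newddpair, the map $\sfc$ is a code map (since $\max\supp\sfc(F)=\max\qdes{F}$), and $\zope x_i$ are creation operators by the previous step. The theorem then guarantees the existence of a unique dual family $\{\forestpoly{F}\suchthat F\in\indexedforests\}$ satisfying the stated recursion, and \Cref{prop:creation_implies_stuff} together with the identification $\fac{F}=\Trim{F}$ gives the displayed formula
$$\forestpoly{F}=\sum_{(i_1,\ldots,i_k)\in\Trim{F}}\zope x_{i_k}\cdots\zope x_{i_1}(1).$$

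Monomial-positivity is then immediate from the formula: $\zope$ is a nonnegative sum of the monomial-positive operators $\rope{1}^k$, and multiplication by $x_i$ is also monomial-positive, so each summand is a monomial-positive polynomial. I do not anticipate any serious obstacle, as every step is either an immediate consequence of a previously established result or a verbatim analogue of the Schubert argument; the only minor subtlety, namely the well-definedness of $\zope$ on $\poly^+$, is handled by the same eventual-vanishing argument as in Section~\ref{sec:schubert}.
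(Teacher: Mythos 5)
Your proposal is correct and follows essentially the same route as the paper: the paper likewise cites the already-established identity $\sum_{i\ge 1}\zope x_i\rope{i}\partial_i=\idem$ from \Cref{cor:ZxRcreation} (equivalent via $\tope{i}=\rope{i}\partial_i$) and then invokes \Cref{thm:creation_plus_code_equal_magic} with the code map $\sfc$. Your extra telescoping re-derivation and the explicit monomial-positivity remark are just fuller exposition of steps the paper leaves implicit.
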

\begin{proof}
    \Cref{cor:ZxRcreation} already contains this identity in the form $\sum_{i\geq 1} \zope x_i\rope{i}\partial_i=\idem$ on $\poly^+$. The rest follows from \Cref{thm:creation_plus_code_equal_magic}.
\end{proof}

Figure~\ref{fig:trimmingexample} shows the result of applying $\tope{1}\tope{1}\tope{3}$ and $\tope{1}\tope{2}\tope{1}$ to the forest polynomial $\forestpoly{F}=x_1^2x_2+x_1^2x_3$. As per Theorem~\ref{th:creation_characterization}, each application of a $\tope{}$ trims the indexed forest at that stage.

\begin{eg}\label{eg:forest}
    We shall consider the indexed forest $F$ whose corresponding forest  polynomial $\forestpoly{F}$ is computed in \cite[Example 3.9]{NT_forest}.
    This happens to be equal to $\schub{14253}$ from Example~\ref{eg:schubert}, but as we shall see the decompositions are different.
    We have $\Trim{F}=\{(2,2,4),(2,3,2)\}$.
    Adopting the shorthand $\zope \textsf{x}_{\mathbf{i}}$ for the composite $\zope x_{i_k}\cdots \zope x_{i_1}$ where $\mathbf{i}=(i_1,\dots,i_k)$ one gets
    \begin{align*}
        &\zope\textsf{x}_{(2,2,4)}(1)=
    \zope\textsf{x}_{(2,4)}(x_1+x_2)=
    \zope\textsf{x}_{(4)}(x_1x_2+x_1^2+x_2^2)=x_1x_2x_4+x_1^2x_4+x_1^2x_3+x_2^2x_4
    \\
   &\zope\textsf{x}_{(2,3,2)}(1)=
    \zope\textsf{x}_{(3,2)}(x_1+x_2)=
    \zope\textsf{x}_{(2)}(x_1x_2+x_1x_3+x_2x_3)=x_1x_2^2+x_1x_2x_3+x_2^2x_3+x_1^2x_2.
    \end{align*}
    Thus we find that $\forestpoly{F}$ is the sum of the two right-hand sides. 
    Observe that even though two final expressions above align with those computed in Example~\ref{eg:schubert}, the expressions obtained at the intermediate stages are not the same.
\end{eg}

\subsection{Diagrammatic Interpretation}
\label{subsec:diagram}

We now give a diagrammatic perspective on forest polynomials that evokes the pipe dream perspective on Schubert polynomials. By applying the relation $\rope{1}+\sum_{i\geq 1} x_i\tope{i}=\idem$ from \Cref{cor:ZxRcreation} to forest polynomials we obtain
\begin{align}\label{one_step_recursion}
    \rope{1}\forestpoly{F}+\sum_{i\in \qdes{F}}x_i\forestpoly{F/i}=\forestpoly{F}.
\end{align}
This identity was previously obtained in \cite[Lemma 3.12]{NT_forest}.
Unwinding this recursion leads to the following combinatorial model similar to the pipe dream expansion of Schubert polynomials, which can be matched up without much difficulty to the combinatorial definitions of forest polynomials in \cite{NST_1,NT_forest}.

We will represent each of the operators $\rope{1}$ and $\tope{1},\tope{2},\ldots$ as a certain graph on a $(\ZZ_{\ge 1} \times 2)$-rectangle as shown in Figure~\ref{fig:labelings_in_two_ways} on the left.
Consider the grid $\ZZ_{\ge 1}\times \ZZ_{\ge 1}$ where we adopt matrix notation, i.e. the elements in the grid are $(i,j)\in \ZZ_{\ge 1}\times \ZZ_{\ge 1}$ where we first coordinate increases top to bottom and the second coordinate increases left to right.

We define a \emph{forest diagram} to be any graph on vertex set $\ZZ_{\ge 1}\times \ZZ_{\ge 1}$ such that the subgraph induced on the vertex set $\{(p,q)\suchthat p\in \ZZ_{\ge 1}, q\in \{k,k+1\}\}$ either represents  $\tope{i}$ for some positive integer $i$ or represents $\rope{1}$, and such that for $p$ large enough all such induced subgraphs represent $\rope{1}$.
In particular we may without loss of information restrict our attention to the finite subgraph on the vertex set $\{(i,j)\suchthat i+j\leq n+1\}$ for some $n$. 
See on the right in Figure~\ref{fig:labelings_in_two_ways} for an example.
Given any such diagram $D$ we let $\textrm{nodes}(D)$ denote the set of $(i,j)$ where we have $(i,j)$ directly connected to both $(i,j-1)$ and $(i+1,j-1)$, and associate a monomial 
\[
\textsf{x}^D\coloneqq\prod_{(i,j)\in \textrm{nodes}(D)}x_i
\]
Note that any such graph is necessarily acyclic and naturally corresponds to an indexed forest, as shown in Figure~\ref{fig:labelings_in_two_ways}.
For $F\in \indexedforests$ let $\diagrams{F}$ denote the set of diagrams whose underlying forest is $F$.

\begin{figure}
    \centering
    \includegraphics[width=\textwidth]{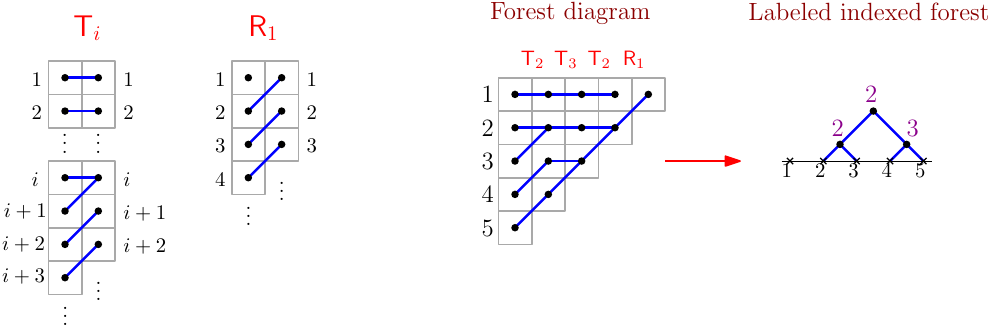}
    \caption{The graphs corresponding to $\tope{i}$ and $\rope{1}$ (left), and a forest diagram with the corresponding labeled indexed forest  (right)}
    \label{fig:labelings_in_two_ways}
\end{figure}
\begin{thm}
    For $F\in \indexedforests$ we have the forest diagram formula
    $$\forestpoly{F}=\sum_{D\in \diagrams{F}}\sfx^D.$$
\end{thm}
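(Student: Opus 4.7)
My plan is to mirror the pipe dream proof of Theorem~\ref{thm:bjs} given earlier in the paper, using the forest analogue of the one-step recursion. Writing $\tilde{g}_F := \sum_{D \in \diagrams{F}} \sfx^D$, the goal is to show $\tilde{g}_F$ satisfies the same recursion as $\forestpoly{F}$, namely equation~\eqref{one_step_recursion}:
$$\tilde{g}_F = \rope{1}\tilde{g}_F + \sum_{i \in \qdes{F}} x_i \tilde{g}_{F/i}.$$
Together with the base case $\tilde{g}_\emptyset = 1$ (checked by observing that the only diagram associated to the empty forest has every strip equal to $\rope{1}$, contributing no nodes), this recursion characterizes $\forestpoly{F}$ after rearranging to $\tilde{g}_F = \zope \sum_{i \in \qdes{F}} x_i \tilde{g}_{F/i}$ and inducting on $|F|$, invoking the invertibility of $\idem - \rope{1}$ on $\poly^+$.

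To establish the recursion for $\tilde{g}_F$, I would partition $\diagrams{F}$ according to the type of the leftmost strip (the one between columns $1$ and $2$). If the first strip is $\tope{i}$, then its unique node at $(i,2)$ contributes a factor of $x_i$, and the necessity that the underlying forest be $F$ forces $i \in \qdes{F}$ (paralleling the corresponding observation in the pipe dream proof). Removing this strip and shifting columns leftward by one gives a natural bijection onto $\diagrams{F/i}$ that preserves the monomials of the remaining nodes; summing contributes $\sum_{i \in \qdes{F}} x_i \tilde{g}_{F/i}$. If instead the first strip is $\rope{1}$, there are no nodes in column $2$, and the bijection that shifts the rest of the diagram leftward lands in $\diagrams{F}$ again, but because the $\rope{1}$ strip enacts the substitution $x_j \mapsto x_{j-1}$ (with $x_1 \mapsto 0$) on the rows visible to its right, the induced bijection on monomials realizes $\rope{1}$, contributing $\rope{1} \tilde{g}_F$.

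The main technical obstacle is verifying the second bijection (first strip equal to $\rope{1}$) and in particular the effect on monomials. Since the naive column-shift leaves the raw row indices of nodes unchanged but we need the resulting monomial to be obtained via $\rope{1}$, the point is to unpack how the $\rope{1}$ strip relabels the pipes/rows to the right of it, so that a node drawn at row $r$ of the original diagram corresponds to a node effectively at row $r-1$ (or is killed, if $r=1$) in the shifted diagram, matching $\rope{1}\, x_r = x_{r-1}$ (or $0$). This is the analogue of the ``uncritical'' case $\Phi_0$ in the pipe dream proof, and once the diagrammatic conventions are spelled out this comparison is immediate. Combining the two cases yields the recursion, completing the induction.
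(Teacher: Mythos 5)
Your plan---establish the recursion~\eqref{one_step_recursion} for $\tilde{g}_F:=\sum_{D\in\diagrams{F}}\sfx^D$, partition $\diagrams{F}$ by the type of the leftmost strip, and handle each part by a weight-tracking bijection---is exactly the paper's, and your treatment of the $\tope{i}$-critical case (delete the first column, land in $\diagrams{F/i}$, pick up a factor $x_i$) coincides with the paper's map $\Phi_i$. The gap is in the uncritical ($\rope{1}$) case. A pure column shift does \emph{not} land back in $\diagrams{F}$: because the $\rope{1}$ strip reindexes the leaves, deleting the first column and renumbering yields a diagram whose underlying indexed forest is the shift $(T_2,T_3,\dots)$ of $F=(T_1,T_2,\dots)$, not $F$ itself. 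Concretely, take $F$ with a single internal node over leaves $2,3$, so $\forestpoly{F}=x_1+x_2$; the only uncritical diagram has its node at $(1,3)$, and dropping the first column gives a node at $(1,2)$, which is a diagram for the forest with node over leaves $1,2$, not for $F$.

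The paper's $\Phi_0$ instead shifts every node one unit diagonally \emph{southwest}, $(r,j)\mapsto(r+1,j-1)$. This simultaneously preserves the underlying indexed forest (so it lands in $\diagrams{F}$), raises every row index by one so that $\sfx^D=\rope{1}\sfx^{\Phi_0(D)}$ via $\rope{1}x_{r+1}=x_r$, and has image exactly the diagrams with no node in row $1$---which is the right target, since $\rope{1}$ kills any $\sfx^{D'}$ with a node in row $1$. Your ``effective row'' correction has the shift going the wrong way: to get $\sfx^D=\rope{1}\sfx^{D'}$ the node of $D'$ must sit at row $r+1$, not $r-1$; as written, your relation $\rope{1}x_r=x_{r-1}$ amounts to $\sfx^{D'}=\rope{1}\sfx^{D}$, which is the wrong side of the identity you need. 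Fixing both the target of the bijection and the direction of the row shift recovers the paper's argument.
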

\begin{proof}
    We give a brisk proof sketch that the claimed expansion satisfies~\eqref{one_step_recursion} along the lines of the proof of Theorem~\ref{thm:bjs}.
    
    Call $D\in \diagrams{F}$ \emph{$i$-critical} if the subgraph induced on $\{(j,1),(j,2)\suchthat j\ge 1\}$ represents  $\tope{i}$ for some positive integer $i$. Otherwise we call $D$ \emph{uncritical}, in which case the aforementioned subgraph necessarily represents $\rope{1}$. Note that if $D$ is $i$-critical, then $i\in \qdes{F}$.
    
    Denote by $\diagrams{F}^0$ the set of uncritical forest diagrams, and by $\diagrams{F}^i$ the set of $i$-critical forest diagrams.
    Consider the weight-preserving bijection
    \[
    \Phi_0:\diagrams{F}^0\to \{D\in \diagrams{F}\suchthat \text{no element of \textrm{nodes}(D) is in row $1$}\}
    \]
    given by shifting all nodes one unit diagonally southwest. Clearly $\sfx^D=\rope{1}\sfx^{\Phi_0(D)}$.

   Consider next the bijection
   \[
    \Phi_i:\diagrams{F}^i\to \{D\in \diagrams{F/i}\}
   \]
   given by taking the subgraph induced on vertices $(p,q)$ with $p\geq 1, q\geq 2$. That is, we ignore vertices of the form $(p,1)$ as well as all incident edges. It is easily seen that $\sfx^{D}=x_i\, \sfx^{\Phi_i(D)}$.
\end{proof}
\subsection{$m$-forest polynomials}
\label{subsec:m_forests}

We now briefly touch upon the more general family of $m$-forest polynomials defined combinatorially in~\cite{NST_1}, where the $m=1$ case recovers the forest polynomials from earlier. By replacing binary forests with $(m+1)$-ary forests, there is an analogously defined set $\indexedforests[m]$ whose compositional monoid structure is analogously identified with the $m$-Thompson monoid $$\indexedforests[m]\cong \Th[m]\coloneqq\langle \tope[m]{1},\tope[m]{2},\ldots\suchthat \tope[m]{i}\tope[m]{j}=\tope[m]{j}\tope[m]{i+m}\text{ for }i>m\rangle.$$
All of the combinatorics and constructions stated specifically for $\indexedforests$ carries over with minor modifications.

In the terminology of the present paper, the $m$-forest polynomials $\{\forestpoly[m]{F}\suchthat F\in\indexedforests[m]\}$ are the unique family of polynomials dual to the \newddpair $(\tope[m]{},\indexedforests[m])$ 
given by \emph{m-quasisymmetric divided differences}
$$\tope[m]{i}=\frac{\rope{i+1}^m-\rope{i}^m}{x_i}.$$
These polynomials were shown to exist in \cite[Appendix]{NST_1} by a  laborious explicit computation. 

Like before, \cite[Definition 3.5]{NST_1} guarantees a code map for $\Th[m]$ in the sense of \Cref{def:codemap}. 
Thus to show that $m$-forest polynomials exist, it suffices to find creation operators. 
This is a straightforward adaptation of the proof for $m=1$.
Let's define $\mzope{m}=1+\rope{1}^m+\rope{1}^{2m}+\cdots:\poly^+\to \poly^+$.
\begin{thm}
    We have $\sum_{i\ge 1}\mzope{m} x_i\tope{i}=\idem$ on $\poly^+$, or in other words $\mzope{m} x_i$ are creation operators for $\tope[m]{i}$. In particular, there exists a family of ``$m$-forest polynomials'' $\{\forestpoly{F}\}_{F\in \indexedforests[m]}$ dual to the \newddpair $(\tope[m]{},\Th[m])$
    with the following monomial-positive expansion
    $$\forestpoly{F}=\sum_{(i_1,\ldots,i_k)\in \Trim{F}}\mzope{m} x_{i_k}\cdots \mzope{m} x_{i_1} (1).$$
\end{thm}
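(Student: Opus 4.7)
My plan is to mimic the $m=1$ proof of \Cref{th:creation_characterization} essentially verbatim, with $\rope{i}$ replaced throughout by $\rope{i}^m$. The argument breaks into three short steps, followed by an invocation of the general framework from \Cref{sec:ddform}.

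First, from the very definition $\tope[m]{i}=\frac{\rope{i+1}^m-\rope{i}^m}{x_i}$ I get the identity
\[
x_i\tope[m]{i}=\rope{i+1}^m-\rope{i}^m.
\]
Summing this over $i\ge 1$ gives a telescoping series. To see the sum is well defined as an endomorphism of $\poly$, I observe that for any $f\in\ZZ[x_1,\ldots,x_n]$ and any $i>n$ we have $\rope{i+1}^m f=f=\rope{i}^m f$, so only finitely many summands contribute. The telescoping collapses to
\[
\sum_{i\ge 1}x_i\tope[m]{i}=\idem-\rope{1}^m.
\]

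Second, I verify the geometric series identity $\mzope{m}(\idem-\rope{1}^m)=\idem$ on $\poly^+$. The operator $\mzope{m}=\idem+\rope{1}^m+\rope{1}^{2m}+\cdots$ is well defined on $\poly^+$ because $\rope{1}^{mk}$ annihilates any polynomial with no constant term that lies in $\ZZ[x_1,\ldots,x_{mk}]$; thus when applied to a specific $f\in\poly^+$ only finitely many terms are nonzero. Since $\mzope{m}$ and $\rope{1}^m$ commute, the computation
\[
\mzope{m}(\idem-\rope{1}^m)=\sum_{k\ge 0}\rope{1}^{mk}-\sum_{k\ge 0}\rope{1}^{m(k+1)}=\idem
\]
holds termwise on $\poly^+$. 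Pre-multiplying the identity from the first step by $\mzope{m}$ then yields
\[
\sum_{i\ge 1}\mzope{m}x_i\tope[m]{i}=\idem \quad \text{on }\poly^+,
\]
proving that $\mzope{m}x_i$ are creation operators for the pair $(\tope[m]{},\Th[m])$.

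Third, I invoke the general machinery from \Cref{sec:ddform}. The code map for $\Th[m]$ provided by \cite[Definition 3.5]{NST_1} gives the second hypothesis of \Cref{thm:creation_plus_code_equal_magic}, and the creation operators just constructed give the first. That theorem therefore guarantees that the dual family $\{\forestpoly[m]{F}\}_{F\in\indexedforests[m]}$ exists and forms a basis of $\poly$. Finally, \Cref{prop:creation_implies_stuff} applied with $Y_i=\mzope{m}x_i$ and $\fac{F}=\Trim{F}$ gives the claimed monomial-positive expansion
\[
\forestpoly[m]{F}=\sum_{(i_1,\ldots,i_k)\in\Trim{F}}\mzope{m}x_{i_k}\cdots\mzope{m}x_{i_1}(1).
\]

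There is no substantive obstacle here: the $m=1$ case provides a complete template, and the only genuine care required is the convergence of the two infinite sums (the telescoping sum $\sum x_i\tope[m]{i}$ and the geometric series $\mzope{m}$), both of which boil down to the fact that $\rope{1}^m$ strips away the lowest $m$ variables, so applied repeatedly to any polynomial with bounded variable support it eventually kills it.
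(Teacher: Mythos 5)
Your proof is correct and is precisely the adaptation the paper intends: the paper omits the proof of this theorem, stating only that it is "a straightforward adaptation of the proof for $m=1$," and your three steps (telescoping $x_i\tope[m]{i}=\rope{i+1}^m-\rope{i}^m$ to get $\idem-\rope{1}^m$, inverting via the geometric series $\mzope{m}$ on $\poly^+$, then invoking \Cref{thm:creation_plus_code_equal_magic} with the code map from \cite[Definition 3.5]{NST_1}) mirror \Cref{le:xr_equals_id}, \Cref{cor:ZxRcreation}, and \Cref{th:creation_characterization} exactly. The convergence checks you supply for the two infinite sums are the right ones and are handled the same way in the $m=1$ case.
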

We will later see an expansion in terms of ``$m$-slides'', a natural generalization of slide polynomials introduced in \cite[Section 8]{NST_1}.

\section{Slide polynomials and Slide expansions}
\label{sec:Slide}

In this section we will show that slide polynomials are dual to a simple \newddpair. We use this to recover the slide polynomial expansions of Schubert polynomials \cite{BJS93,AssSea17} and forest polynomials \cite{NT_forest}, and to obtain a simple formula for the coefficients of the slide expansion of any $f\in\poly$.

\subsection{Slide polynomials}

For a sequence $a=(a_1,\ldots,a_k)$ with $a_i\ge 1$ we define the set of \emph{compatible sequences}
\begin{align}
    \compatible{a}=\{(i_1\le \cdots \le i_k):i_j\le a_j,\text{ and if }a_j<a_{j+1}\text{ then }i_j<i_{j+1}\}.
\end{align}
Note that this convention is the opposite of what the authors employed in \cite{NST_1}. 
As we shall soon see, this convention arises naturally from the new \newddpair we will shortly create.

We define the \emph{slide polynomial} to be
\begin{equation*}
    \slide{a}=\sum_{(i_1,\dots,i_k)\in \compatible{a}}x_{i_1}\cdots x_{i_k}.
\end{equation*}
\begin{eg}\label{ex:143}
    For $\slide{(1,4,3)}$ we have $\compatible{a}=\{(1,2,2),(1,2,3),(1,3,3)\}$, so
    $$\slide{(1,4,3)}=x_1x_2^2+x_1x_2x_3+x_1x_3^2.$$
\end{eg}

 Let $\winc=\{(a_1\le \cdots \le a_k)\suchthat a_i\geq 1\text{ for } 1\leq i\leq k\} $. For a sequence $a$, we define $\overline{a}\in \winc$ be the (component-wise) maximal element of $\compatible{a}$, and undefined if $\compatible{a}$ is empty. 
Then it is easily checked that $\slide{a}=\slide{\overline{a}}$ if $\overline{a}$ is defined,  and $\slide{a}=0$ otherwise.
For instance note that for $a=(1,4,3)$ in Example~\ref{ex:143} we have $\overline{a}=(1,3,3)$.
The combinatorial construction of $\overline{a}$ from $a$ is already present in \cite[Lemma 8]{ReiShi95}, see also~\cite{NT_flagged}.
As shown by Assaf and Searles, the slides $\{\slide{a}\suchthat a\in \winc\}$ form a basis of $\poly$ \cite[Theorem 3.9]{AssSea17}. 
Note that the slides \emph{ibid.} are indexed by $c\in\nvect$, via the bijection with $\winc$ given by letting $c_j$ be the number of indices $i$ such that $a_i=j$.

\subsection{Slide extractors and creators}
\label{subsec:slide_extractors_and_creators}

We define a partial monoid structure on $\winc$ by
$$(a_1,\cdots a_k)\cdot (b_1,\ldots,b_\ell)=\begin{cases}(a_1,\ldots,a_k,b_1,\ldots,b_\ell)& \text{if }a_k\le b_1\\\text{undefined}&\text{otherwise.}\end{cases}$$

This makes $\winc$ into a graded right-cancelative monoid  with $\last{(b_1,\ldots,b_k)}=\{b_k\}$ and $\fac{(b_1,\ldots,b_k)}=\{(b_1,\ldots,b_k)\}$.

Let $\rope{i}^{\infty}$ be the \emph{truncation}  operator defined by $\rope{i}^{\infty}(f)=f(x_1,\ldots,x_{i-1},x_i,0,0,\ldots)$. It is the limit of $\rope{i}^{m}(f)$ when $m$ tends to infinity, as these polynomials clearly become stable equal to $\rope{i}^{\infty}(f)$.

\begin{defn}[Slide extractor]
\label{defn:Di}
 Define the \emph{slide extractor} to be
$$\dope{i}=\rope{i+1}^{\infty}\partial_i,$$
which for $f\in \poly$ is given concretely by
$$\dope{i}f=\frac{f(x_1,\ldots,x_{i-1},x_i,0,0,\ldots)-f(x_1,\ldots,x_{i-1},0,x_i,0,\ldots)}{x_i}.$$
\end{defn}

We have $\dope{j}f\in \poly_j$, thus $\partial_i\dope{j}=0$ if $i>j$, and so $\dope{i}\dope{j}=0$. Thus the operators $\dope{i}$ give a representation of $\winc$, and with $\dope{}=\dope{1}$ we have a \newddpair $(\dope{},\winc)$.

\begin{thm}
\label{thm:Di_and_slides}
 Slide polynomials $(\slide{a})_{a\in \winc}$ form the unique dual family of polynomials to the dd-pair $(\dope{},\winc)$. 
   Thus for $(b_1\le \cdots \le b_k)\in \winc$, we have
    \begin{align*}
    \dope{i}\,\slide{b_1,\ldots,b_k}=\delta_{i,b_k}\slide{b_{1},\ldots,b_{k-1}}.
    \end{align*}
\end{thm}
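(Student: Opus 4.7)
The plan is to verify directly that the slide polynomials satisfy the two defining properties of a dual family to $(\dope{},\winc)$. The normalization $\slide{\emptyset}=1$ is immediate from the definition. For the recursion relation, I will fix $b=(b_1\le\cdots\le b_k)\in\winc$ and work from the explicit formula $\dope{i}f=(\rho_1 f-\rho_2 f)/x_i$, where $\rho_1 f=f(x_1,\dots,x_i,0,0,\dots)$ and $\rho_2 f=f(x_1,\dots,x_{i-1},0,x_i,0,0,\dots)$, splitting into three cases based on the relationship between $i$ and $b_k$.

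The case $i>b_k$ is easy: every compatible sequence for $b$ has $i_k\le b_k<i$, so $\slide{b}$ lies in $\ZZ[x_1,\dots,x_{b_k}]$ and is trivially symmetric in $x_i,x_{i+1}$, giving $\partial_i\slide{b}=0$ and hence $\dope{i}\slide{b}=0$. The case $i=b_k$ is a direct computation: since every compatible sequence already satisfies $i_k\le b_k=i$, one has $\rho_1\slide{b}=\slide{b}$, while $\rho_2\slide{b}$ retains only those compatible sequences with $i_k\le i-1$ (the entry $i+1$ is forbidden since $i+1>b_k$). The difference is therefore $x_i$ times the sum of $x_{i_1}\cdots x_{i_{k-1}}$ over $(i_1,\dots,i_{k-1},i)\in\compatible{b}$, and a quick check of compatibility conditions shows the map $(i_1,\dots,i_{k-1})\mapsto(i_1,\dots,i_{k-1},i)$ is a bijection from $\compatible{b_1,\dots,b_{k-1}}$ onto this set; the strict-increase condition at position $k{-}1$, if imposed by $b_{k-1}<b_k$, is automatic from $i_{k-1}\le b_{k-1}<i$.

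The main obstacle is the case $i<b_k$, where I must show $\rho_1\slide{b}=\rho_2\slide{b}$. My approach is to group monomials by shape. Any compatible sequence contributing to $\rho_1\slide{b}$ (resp.\ $\rho_2\slide{b}$) can be uniquely written $(j_1,\dots,j_{k-\ell},v,\dots,v)$ with $\ell\ge 0$ trailing copies of $v=i$ (resp.\ $v=i+1$) and $j_{k-\ell}<i$; both yield the monomial $x_{j_1}\cdots x_{j_{k-\ell}}\,x_i^{\ell}$. Since $b_k\ge i+1$, the bound $v\le b_k$ is satisfied for both values of $v$, and the remaining compatibility constraints reduce in both cases to the same conditions: $b_{k-\ell+1}=\cdots=b_k$, and $(j_1,\dots,j_{k-\ell})\in\compatible{b_1,\dots,b_{k-\ell}}$ with $j_{k-\ell}<i$. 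Thus the coefficient of each monomial agrees between $\rho_1\slide{b}$ and $\rho_2\slide{b}$, yielding $\dope{i}\slide{b}=0$.

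For uniqueness, I will invoke that $\{\slide{w}\suchthat w\in\winc\}$ is a $\ZZ$-basis of $\poly$ (\cite[Theorem 3.9]{AssSea17}). Given any dual family $\{S'_w\}$, \Cref{prop:dualpolyproperties}\ref{it:2.4.3} gives $\ct X_v S'_w=\delta_{v,w}$; the same relation holds for the $\slide{w}$ once we have verified the recursion above. Expanding $S'_w$ in the slide basis and applying $\ct X_v$ then forces the expansion coefficients to match those of $\slide{w}$, so $S'_w=\slide{w}$.
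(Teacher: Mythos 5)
Your proof is correct, and it takes the ``direct computation'' route that the paper explicitly acknowledges is possible but deliberately declines to pursue: the paper remarks just after the statement that ``the formula above can be checked directly by a simple computation, as we have an explicit expansion for slide polynomials. We will instead use \Cref{thm:creation_plus_code_equal_magic}, and this will come as a consequence.'' The paper's proof instead establishes that $\winc$ has a code map, proves the $\builder{i}$ are creation operators (\Cref{prop:Bi_slide_creators}), invokes \Cref{thm:creation_plus_code_equal_magic} to get existence, uniqueness, and the basis property for the dual family all at once, and then uses \Cref{prop:Bi_build_slides} to identify that dual family with the slide polynomials. Your argument is more elementary and self-contained on the verification side: the three-way case split $i>b_k$, $i=b_k$, $i<b_k$ with the shape decomposition $(j_1,\dots,j_{k-\ell},v^\ell)$ in the last case is clean and complete (including the boundary cases $\ell=0$ and $\ell=k$, which check out). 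For uniqueness you lean on the cited basis property of slides from \cite[Theorem 3.9]{AssSea17}, whereas the paper's machinery re-derives that basis property internally; this is a fair trade since the paper already quotes that theorem in \S 5.1. The trade-off is that your route does not produce the slide creation operators $\builder{i}$, which the paper needs again in \Cref{prop:slide_exp} and \Cref{cor:slide_mult_easy}, so within the paper's architecture the machinery-based proof is doing extra useful work; as a standalone proof of the stated theorem, yours is perfectly fine.
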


Note that the formula above can be checked directly by a simple computation, as we have an explicit expansion for slide polynomials. We will instead use \Cref{thm:creation_plus_code_equal_magic}, and this will come as a consequence. 

\begin{defn}
\label{defn:slide_creator}
    Define a linear map $\builder{i}\in \End(\poly)$ as $$\builder{i}=\sum_{1\le k \le i}x_{k}\rope{k}^{i-k}\rope{i+1}^{\infty}.$$ Explicitly, $\builder{i}$ vanishes outside of $\poly_i$ and is defined on monomials of $\poly_i$ by
     $$\builder{i}(x_1^{p_1}\cdots x_j^{p_j}x_i^p)=x_1^{p_1}\cdots x_j^{p_j}(\sum_{j<k\le i}x_{k}^{p+1})$$ where  $p_j>0$ or $j=0$.
\end{defn}

\begin{prop}
\label{prop:Bi_slide_creators}
The $\builder{i}$ are creation operators for $\dope{i}$: on $\poly^+$, we have $$\sum_{i\ge 1}\builder{i}\dope{i}=\mathrm{id}.$$
\end{prop}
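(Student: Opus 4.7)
The plan is to verify $\sum_{i \ge 1}\builder{i}\dope{i}(f) = f$ for $f \in \poly^+$ via a direct double-telescoping computation.

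First, since $\dope{i}(f)$ is a polynomial in $x_1, \ldots, x_i$, the outer truncation $\rope{i+1}^\infty$ appearing inside $\builder{i}$ acts trivially on $\dope{i}(f)$, so
\[
    \builder{i}\dope{i}(f) = \sum_{k=1}^{i} x_k \rope{k}^{i-k}\dope{i}(f).
\]
Next, I would use that $\rope{k}^{i-k}$ acts on a polynomial in $\poly_i$ by the substitution $x_l \mapsto x_l$ for $l < k$, $x_l \mapsto 0$ for $k \le l < i$, and $x_i \mapsto x_k$. Applying this substitution to the two-term formula for $\dope{i}(f)$ from \Cref{defn:Di} and multiplying by $x_k$ cancels the denominator (now substituted to $x_k$), yielding the key identity
\[
    x_k \rope{k}^{i-k}\dope{i}(f) = \sigma_{k,i}(f) - \sigma_{k,i+1}(f),
\]
where $\sigma_{k,j}(f)$ denotes $f$ evaluated at the point whose first $k-1$ coordinates are $x_1, \ldots, x_{k-1}$, whose $j$-th coordinate is $x_k$, and whose remaining coordinates are all zero.

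I would then interchange the order of summation in $\sum_{i \ge 1}\sum_{k=1}^{i}[\sigma_{k,i}(f) - \sigma_{k,i+1}(f)]$. For fixed $k$, the inner sum over $i \ge k$ telescopes to $\sigma_{k,k}(f) - \lim_{i \to \infty}\sigma_{k,i}(f)$; since $f$ has only finitely many variables, the limit stabilizes to $\rope{k}^\infty(f)$ once slot $i$ exceeds $f$'s support, while $\sigma_{k,k}(f) = \rope{k+1}^\infty(f)$. The outer telescope then collapses $\sum_{k \ge 1}[\rope{k+1}^\infty(f) - \rope{k}^\infty(f)]$ to $f - \rope{1}^\infty(f) = f - \ct(f)$, and $\ct(f) = 0$ on $\poly^+$ completes the proof.

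The only substantive step is the substitution-tracking calculation producing the identity for $x_k\rope{k}^{i-k}\dope{i}(f)$; once that is in hand, the rest is a formal double telescoping. The interchange of summations requires no delicate justification because, for any fixed $f$, only finitely many terms differ from their stable limiting values.
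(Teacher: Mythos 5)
Your proof is correct and is essentially the same computation as the paper's, just run in the opposite direction: the paper starts from the telescoping identity $\sum_r(\rope{r+1}^\infty-\rope{r}^\infty)=\idem$ on $\poly^+$, expands each summand via the identity $\sigma_{k,i}(f)-\sigma_{k,i+1}(f)=x_k\rope{k}^{i-k}\rope{i+1}^\infty\dope{i}(f)$, and regroups by $i$ to recognize $\builder{i}$, whereas you start from $\builder{i}\dope{i}$, unpack it via the same identity, and then perform the double telescope. Both hinge on exactly the same pointwise substitution identity, so this is the paper's argument in reverse.
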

\begin{proof}
On the one hand, since $\rope{1}^{\infty}=\ct$ vanishes on $\poly^+$ we obtain by telescoping 
    \begin{align}
    \label{eq:Rinfty_telescope}
        \sum_{r\geq 1} (\rope{r+1}^{\infty}-\rope{r}^{\infty})=\mathrm{id}.
    \end{align}
Now, we compute that
    \begin{align*}
    (\rope{r+1}^{\infty}-\rope{r}^{\infty})f=&f(x_1,\ldots,x_{r},0,\ldots)-f(x_1,\ldots,x_{r-1},0,\ldots)\\
    =&\sum_{j\ge 0}f(x_1,\ldots,x_{r-1},0^j,x_{r},0,\ldots)-f(x_1,\ldots,x_{r-1},0^{j+1},x_{r},0,\ldots)\\
    =&\sum_{j\ge 0}\rope{r}^j\,\rope{r+j+1}^{\infty}(x_{r+j}-x_{r+j+1})\,\partial_{r+j}f\\
    =&\sum_{j\ge 0}x_r\rope{r}^j\,\rope{r+j+1}^{\infty}\partial_{r+j}f\\
    =&\sum_{j \ge 0}(x_r\rope{r}^j\,\rope{r+j+1}^{\infty})\,\dope{r+j}f.\end{align*}
    Summing this over all $r$, the coefficient of $\dope{i}f$ is then $\sum_{1 \le k \le i}x_k\rope{k}^{i-k}\rope{i+1}^{\infty}=\builder{i}$.
\end{proof}
Our next result, \Cref{prop:Bi_build_slides}, applied to increasing sequences $1\le a_1\le \cdots \le a_k$ implies that the slide polynomials are the dual family of polynomials to $(\dope{},\winc)$. 
We note that although we could have taken an alternate choice of creation operators such as $\widetilde{\builder{i}}=\sum_{1\le k \le i}x_{k}\rope{k}^{i-k}$ (because $\rope{i+1}^{\infty}\dope{i}=\dope{i}$), \Cref{prop:Bi_build_slides} shows surprisingly that composites of the $\builder{i}$ operators construct slide polynomials even for non-decreasing sequences \textemdash{} a property not formally guaranteed by the slide polynomials being the dual family to $(\dope{},\winc)$. 
This additional property of $\builder{i}$ will be needed later in \Cref{prop:slide_exp} to recover the slide expansions of Schubert and forest polynomials.

\begin{prop}
\label{prop:Bi_build_slides}
    For any sequence $(a_1,\ldots,a_k)$ with $a_i\ge 1$ we have
    $$\slide{a_1,\ldots,a_k}=\builder{a_k}\cdots \builder{a_1}(1).$$
\end{prop}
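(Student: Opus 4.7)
The plan is to induct on $k$. The base case $k=0$ is $\slide{\emptyset}=1$, matching the empty composition. For the inductive step, given $\slide{a_1,\ldots,a_{k-1}} = \builder{a_{k-1}}\cdots\builder{a_1}(1)$, it suffices to establish the key identity
\[
\builder{i}(\slide{\mathbf{a}}) = \slide{(\mathbf{a},i)}
\]
for any sequence $\mathbf{a}=(a_1,\ldots,a_{k-1})$ and any $i\geq 1$.

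First I would reduce to the case where $\mathbf{a}$ is weakly increasing. If $\overline{\mathbf{a}}$ is undefined then $\slide{\mathbf{a}}=0$, and $\slide{(\mathbf{a},i)}=0$ too since any compatible sequence for $(\mathbf{a},i)$ restricts to one for $\mathbf{a}$. Otherwise $\slide{\mathbf{a}}=\slide{\overline{\mathbf{a}}}$ as noted in the text, and similarly $\slide{(\mathbf{a},i)}=\slide{(\overline{\mathbf{a}},i)}$: the last entries of $\mathbf{a}$ and $\overline{\mathbf{a}}$ agree so the strict-increase condition at the transition to $i$ is unchanged; the strict inequality patterns $\{p : a_p<a_{p+1}\}$ for $p\leq k-2$ coincide with $\{p : \overline{a}_p<\overline{a}_{p+1}\}$; and $\compatible{\mathbf{a}}=\compatible{\overline{\mathbf{a}}}$ automatically forces $j_p\leq\overline{a}_p$ for any compatible $(j_1,\ldots,j_{k-1})$.

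For $\mathbf{b}=(b_1\leq\cdots\leq b_{k-1})$ weakly increasing, I would apply
\[
\builder{i}(f)=\sum_{k'=1}^i x_{k'}\cdot f(x_1,\ldots,x_{k'-1},\underbrace{0,\ldots,0}_{i-k'},x_{k'},0,0,\ldots)
\]
termwise to $\slide{\mathbf{b}}=\sum_{\mathbf{j}\in\compatible{\mathbf{b}}}x_{j_1}\cdots x_{j_{k-1}}$. For each $k'$, the substitution kills any monomial with some $j_m\in\{k',\ldots,i-1\}\cup\{i+1,i+2,\ldots\}$, while the surviving monomials (with $j_1\leq\cdots\leq j_{k-1-r}<k'$ followed by $r$ entries equal to $i$) transform into $x_{j_1}\cdots x_{j_{k-1-r}}x_{k'}^r$. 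Multiplying by $x_{k'}$ produces $x_{l_1}\cdots x_{l_k}$ for $(l_1,\ldots,l_k)=(j_1,\ldots,j_{k-1-r},k',\ldots,k')$ with $r+1$ copies of $k'$.

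The final step is to verify that the triples $(k',r,(j_1,\ldots,j_{k-1-r}))$ arising from this expansion are in weight-preserving bijection with $\compatible{(\mathbf{b},i)}$. In the forward direction, $(l_1,\ldots,l_k)$ visibly lies in $\compatible{(\mathbf{b},i)}$ since $j_{k-1-r}<k'\leq i$ and $(j_1,\ldots,j_{k-1-r},i,\ldots,i)\in\compatible{\mathbf{b}}$. For the inverse, one sets $k'=l_k$ and $r+1$ to be the multiplicity of $k'$ in $(l_1,\ldots,l_k)$. The main obstacle is showing that the corresponding $(l_1,\ldots,l_{k-1-r},i,\ldots,i)$ belongs to $\compatible{\mathbf{b}}$: the critical point is that $b_{k-r}=\cdots=b_{k-1}\geq i$, which follows because a strict inequality $b_p<b_{p+1}$ with $k-r\leq p\leq k-2$ would force $l_p<l_{p+1}$ despite $l_p=l_{p+1}=k'$, and (when $r\geq 1$) an inequality $b_{k-1}<i$ would similarly force $l_{k-1}<l_k$ despite both being $k'$.
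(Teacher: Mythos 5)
Your proof is correct and rests on essentially the same bijection as the paper's. The two small presentational differences are that you first reduce to weakly increasing $\mathbf{a}$ via $\slide{\mathbf{a}}=\slide{\overline{\mathbf{a}}}$ (whereas the paper defines the sets $A_{(i_1,\ldots,i_k)}$ directly for arbitrary sequences, making the reduction unnecessary) and that you phrase the bijection as a termwise application of $\builder{i}$ rather than via those sets; unwinding the notation shows your triples $(k',r,(j_1,\ldots,j_{k-1-r}))\mapsto(j_1,\ldots,j_{k-1-r},k'^{\,r+1})$ coincide with the paper's correspondence $\bigl((i_1,\ldots,i_\ell,p^{k-\ell}),i\bigr)\mapsto(i_1,\ldots,i_\ell,i^{\,k-\ell+1})$.
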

\begin{proof}
By induction, it is enough to show that if $a=(a_1,\dots,a_k)$ then $\builder{p}\slide{a}=\slide{a_1,\dots,a_k,p}$ for any $p\geq 1$. 
In what follows we write $\lambda^\ell$ for the length $\ell$ sequence $\lambda,\ldots,\lambda$. For $(i_1,\ldots,i_k)\in \compatible{a}$, we define a set $$A_{(i_1,\ldots,i_k)}=\begin{cases}\emptyset & \text{if }i_k>p\\\{(i_1,\ldots,i_\ell,i^{k-\ell+1})\suchthat i_\ell<i\le p\}&\text{if }(i_1,\ldots,i_k)=(i_1,\ldots,i_\ell,p^{k-\ell})\text{ with }i_\ell<p.\end{cases}$$
Then by definition of $\builder{p}$ and the slide polynomials as generating functions, it suffices to show that $$\bigsqcup_{(i_1,\ldots,i_k)\in \compatible{a}} A_{(i_1,\ldots,i_k)}=\compatible{a_1,\ldots,a_k,p}.$$
Firstly, the $A_{(i_1,\ldots,i_k)}$ are obviously disjoint sets, the elements being uniquely determined by the longest initial subsequence of $(i_1,\ldots,i_k)$ strictly less than $p$, so the union is disjoint as claimed. Next, we show $A_{(i_1,\ldots,i_k)}\subset \compatible{a_1,\ldots,a_k,p}$. Indeed, since $(i_1,\ldots,i_\ell,p^{k-\ell})\in \compatible{a}$,
\begin{itemize}
    \item if $\ell<k$ we must have $a_k\ge p$ and so $(i_1,\ldots,i_\ell,p^{k-\ell+1})\in \compatible{a_1,\ldots,a_k,p}$, and
    \item if $\ell=k$ then because $p>i_\ell$ we also have $(i_1,\ldots,i_\ell,p^{k-\ell+1})\in \compatible{a_1,\ldots,a_k,p}$.
\end{itemize} 
The other sequences $(i_1,\ldots,i_\ell,i^{k-\ell+1})\in A_{(i_1,\ldots,i_k)}$ must lie in $\compatible{a_1,\ldots,a_k,p}$ as well since it is a smaller sequence with the same indices at which strict ascents occur.

Finally, every sequence in $\compatible{a_1,\ldots,a_k,p}$ can be written as $(i_1,\ldots,i_\ell,i^{k-\ell+1})$ for some $0\le \ell\le k$ and $i_\ell<i\le p$, and we claim that $(i_1,\ldots,i_\ell,p^{k-\ell})\in \compatible{a}$. 
Note that because the last $k-\ell+1$ elements of $(i_1,\ldots,i_\ell,i^{k-\ell+1})$ are equal, we have $a_{k-\ell}\ge a_{k-\ell+1}\ge\cdots \ge  a_k\ge p$. Therefore as $(i_1,\ldots,i_\ell,p^{k-\ell+1})$ has the same indices of strict ascents as $(i_1,\ldots,i_\ell,i^{k-\ell+1})$ we have the sequence $(i_1,\ldots,i_\ell,p^{k-\ell+1})\in \compatible{a_1,\ldots,a_k,p}$, which in particular implies that $(i_1,\ldots,i_\ell,p^{k-\ell})\in \compatible{a}$.
\end{proof}

We can now prove \Cref{thm:Di_and_slides}.
\begin{proof}[Proof of \Cref{thm:Di_and_slides}]
    We have the code map $c:\winc\to \nvect$ given by $c(a_1\le \cdots \le a_k)=(c_1,c_2,\ldots)$ where $c_i=\#\{j\suchthat a_j=i\}$. It satisfies the conditions of \Cref{def:codemap}. The $\builder{i}$ are shown to be creation operators for $\dope{}$ in~\Cref{prop:Bi_slide_creators}. We can thus apply \Cref{thm:creation_plus_code_equal_magic}, which gives us that the dual family to $(\dope{},\winc)$ is unique, forms a basis of $\poly$, and is given explicitly by $\builder{a_k}\cdots \builder{a_1}(1)$ for $(a_1,\ldots,a_k)\in\winc$. These are precisely the slide polynomials by \Cref{prop:Bi_build_slides}, which concludes the proof.
\end{proof}

\subsection{Applications}
\label{subsec:Di_applications}

We first show how to recover the slide expansions of Schubert polynomials and forest polynomials, the first one being the celebrated BJS formula~\cite{BJS93}.

\begin{prop}
\label{prop:slide_exp}
     We have the following expansions for any $w\in S_\infty$ and any $F\in\indexedforests$.
    \begin{align*}\schub{w}=&\sum_{(i_1,\ldots,i_k)\in \red{w}}\slide{i_1,\ldots,i_k}\\\forestpoly{F}=&\sum_{(i_1,\ldots,i_k)\in \Trim{F}}\slide{i_1,\ldots,i_k}.
    \end{align*}
\end{prop}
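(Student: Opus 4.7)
The strategy is to show that the slide creators $\builder{i}$ also function as creation operators for the dd-pairs $(\partial, S_\infty)$ and $(\tope{}, \indexedforests)$, and then to combine \Cref{prop:creation_implies_stuff} with \Cref{prop:Bi_build_slides}.

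The crucial observation to establish first is that $\builder{i}\,\rope{i+1}^{\infty}=\builder{i}$ as operators. This is immediate from \Cref{defn:slide_creator}: every term of $\builder{i}$ ends on the right with a factor of $\rope{i+1}^{\infty}$, which is idempotent. Combined with $\dope{i}=\rope{i+1}^{\infty}\partial_i$ from \Cref{defn:Di}, this gives $\builder{i}\partial_i=\builder{i}\,\rope{i+1}^{\infty}\partial_i=\builder{i}\dope{i}$. For the forest operator $\tope{i}=\rope{i+1}\partial_i$, the easy identity $\rope{i+1}^{\infty}\rope{i+1}=\rope{i+1}^{\infty}$ (immediate from the definitions) yields $\rope{i+1}^{\infty}\tope{i}=\dope{i}$ and hence $\builder{i}\tope{i}=\builder{i}\dope{i}$ by the same reasoning. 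Summing over $i\geq 1$ and invoking \Cref{prop:Bi_slide_creators} gives
\[
\sum_{i\geq 1}\builder{i}\partial_i \;=\; \sum_{i\geq 1}\builder{i}\tope{i} \;=\; \idem \qquad \text{on } \poly^+,
\]
so $\{\builder{i}\}$ is a family of creation operators for both $(\partial, S_\infty)$ and $(\tope{}, \indexedforests)$.

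Next I would apply \Cref{prop:creation_implies_stuff} with $Y_i=\builder{i}$ to the Schubert and forest polynomial families, which are dual to $(\partial, S_\infty)$ and $(\tope{}, \indexedforests)$ by \Cref{cor:ZxRcreation} and \Cref{th:creation_characterization} respectively. Using that $\fac{w}=\red{w}$ in $S_\infty$ and $\fac{F}=\Trim{F}$ in $\indexedforests$, this yields
\[
\schub{w}=\sum_{(i_1,\ldots,i_k)\in\red{w}}\builder{i_k}\cdots\builder{i_1}(1)
\qquad\text{and}\qquad
\forestpoly{F}=\sum_{(i_1,\ldots,i_k)\in\Trim{F}}\builder{i_k}\cdots\builder{i_1}(1).
\]
Finally, \Cref{prop:Bi_build_slides} replaces each composite $\builder{i_k}\cdots\builder{i_1}(1)$ by the slide polynomial $\slide{i_1,\ldots,i_k}$, completing both expansions.

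There is no real obstacle: the only substantive step is recognizing that $\builder{i}$ absorbs $\rope{i+1}^{\infty}$ on the right, which is precisely why the same family $\{\builder{i}\}$ simultaneously serves as creation operators for three distinct dd-pairs. What makes the plan work beyond merely unwinding the dual-family description is the extra flexibility provided by \Cref{prop:Bi_build_slides}, which allows non-weakly-increasing sequences $(i_1,\ldots,i_k)$ as inputs to $\builder{i_k}\cdots\builder{i_1}$ \textemdash{} exactly what reduced words in $\red{w}$ and trimming sequences in $\Trim{F}$ typically are.
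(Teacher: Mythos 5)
Your proposal is correct and follows essentially the same route as the paper: absorb $\rope{i+1}^{\infty}$ (resp.\ $\rope{i+1}$) into $\builder{i}$ to get $\builder{i}\partial_i=\builder{i}\tope{i}=\builder{i}\dope{i}$, conclude from \Cref{prop:Bi_slide_creators} that the $\builder{i}$ are creation operators for all three dd-pairs, and then identify the resulting composites with slide polynomials via \Cref{prop:Bi_build_slides}. Your citation of \Cref{prop:creation_implies_stuff} in place of the paper's \Cref{thm:creation_plus_code_equal_magic} is an equally valid (indeed slightly more direct) way to finish, since the dual families are already known to exist.
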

\begin{proof}
Note that $\builder{i}\dope{i}=\builder{i}\rope{i+1}^{\infty}\partial_i$. Because $\tope{i}=\rope{i}\partial_i=\rope{i+1}\partial_i$, we can either absorb all or all but one $\rope{i+1}$ into $\builder{i}$ to obtain $$\builder{i}\dope{i}=\builder{i}\tope{i}=\builder{i}\partial_i.$$ Then \Cref{prop:Bi_slide_creators} shows that $\builder{i}$ are creation operators for $\partial_i$ and for $\tope{i}$. We can then use~\Cref{thm:creation_plus_code_equal_magic} for the corresponding dd-pairs:
     \begin{align*}\schub{w}=&\sum_{(i_1,\ldots,i_k)\in \red{w}}\builder{i_k}\cdots \builder{i_1}(1)=\sum_{(i_1,\ldots,i_k)\in \red{w}}\slide{i_1,\ldots,i_k}\\\forestpoly{F}=&\sum_{(i_1,\ldots,i_k)\in \Trim{F}}\builder{i_k}\cdots \builder{i_1}(1)=\sum_{(i_1,\ldots,i_k)\in \Trim{F}}\slide{i_1,\ldots,i_k}.\qedhere
    \end{align*}
\end{proof}

Because slide polynomials are a basis of $\poly$, \Cref{prop:dualpolyproperties} implies the following.
\begin{cor}\label{cor:slide_expansion_easy}
     The slide expansion of a degree $k$ homogenous polynomial $f\in \poly$ is given by
     \[
        f=\sum_{(i_1\le \cdots \le i_k)\in \winc} (\dope{i_1}\cdots\dope{i_k}f) \, \slide{i_1,\dots,i_k}.
     \]
\end{cor}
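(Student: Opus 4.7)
The plan is to deduce this directly from the reconstruction formula \eqref{eqn:ctreconstruct} of \Cref{prop:dualpolyproperties} applied to the \newddpair $(\dope{},\winc)$. By \Cref{thm:Di_and_slides}, which was established via \Cref{thm:creation_plus_code_equal_magic}, we know both that $\{\slide{a}\suchthat a\in\winc\}$ is the dual family of polynomials to $(\dope{},\winc)$ and that it is a $\ZZ$-basis of $\poly$. In particular any $f\in\poly$ lies in the $\ZZ$-span of the slide polynomials, which is the hypothesis needed to apply \eqref{eqn:ctreconstruct}.

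Next I would unpack the general formula $f=\sum_{w\in M}(\ct X_w f)S_w$ in our setting. Because $\winc$ is graded and factorizations in $\winc$ are unique, for $w=(b_1\le\cdots\le b_\ell)$ the operator $X_w$ is the composition $\dope{b_1}\dope{b_2}\cdots\dope{b_\ell}$ (read from left to right in the factorization of $w$). Substituting into \eqref{eqn:ctreconstruct} gives
\[
f=\sum_{(b_1\le\cdots\le b_\ell)\in\winc}\ct(\dope{b_1}\cdots\dope{b_\ell}f)\,\slide{b_1,\ldots,b_\ell}.
\]

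The final step is to restrict the sum to sequences of length $k$ when $f$ is homogeneous of degree $k$. Since each $\dope{i}$ has degree $-1$, the polynomial $\dope{b_1}\cdots\dope{b_\ell}f$ is homogeneous of degree $k-\ell$. If $\ell>k$ it vanishes, and if $\ell<k$ it has positive degree and is therefore killed by $\ct$. When $\ell=k$, $\dope{b_1}\cdots\dope{b_k}f$ is already a constant on which $\ct$ acts as the identity, so the corresponding coefficient is simply $\dope{b_1}\cdots\dope{b_k}f$, yielding the claimed formula.

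The only real subtlety is bookkeeping: one must be careful that the order in which the $\dope{i_j}$ appear in $X_w$ matches the order of the factorization of $w$, but for the totally ordered monoid $\winc$ there is a unique factorization $(b_1)(b_2)\cdots(b_\ell)$, so no ambiguity arises. Once this is noted, the corollary is an immediate specialization of the general machinery, and there is no substantive new obstacle beyond what was already resolved in proving \Cref{thm:Di_and_slides}.
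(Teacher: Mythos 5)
Your proposal is correct and is essentially the paper's own argument: the paper proves this corollary in one line by noting that slide polynomials form a basis of $\poly$ (so \Cref{prop:dualpolyproperties}\ref{it:2.4.3} applies) and then invoking the reconstruction formula \eqref{eqn:ctreconstruct}. Your additional bookkeeping about unique factorizations in $\winc$ and the degree argument for restricting to length-$k$ sequences just makes explicit what the paper leaves implicit.
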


\begin{eg}\label{eg:doping}
Consider $f=\schub{21534}=x_1x_3^2 + x_1x_2x_3 + x_1^2x_3 + x_1x_2^2 + x_1^2x_2 + x_1^3$.
Figure~\ref{fig:dope_operators} shows applications of slide extractors in weakly decreasing order of the indices. Corollary~\ref{cor:slide_expansion_easy} says
\[
    \schub{21534}=\slide{1,3,3}+\slide{1,1,3}+\slide{1,1,1}.
\]
 \begin{figure}[!ht]
    \centering
    \includegraphics[scale=0.7]{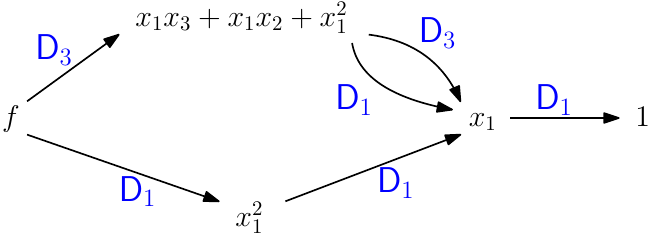}
    \caption{Repeatedly applying $\dope{}$s to extract slide coefficients for $f=\schub{21534}$}
    \label{fig:dope_operators}
\end{figure}   
\end{eg}
As an application let us reprove the positivity of slide multiplication established combinatorially by Assaf--Searles \cite[Theorem 5.1]{AssSea17} using the ``quasi-shuffle product''.
In contrast we use a Leibniz rule for the $\dope{i}$ that makes the positivity manifest. 
We shall not pursue unwinding our approach to make the combinatorics explicit.
\begin{lem}
\label{lem:Rislidepos}
    $\rope{j}\slide{a}$ is a slide polynomial or $0$. 
\end{lem}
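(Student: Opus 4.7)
The plan is to compute $\rope{j}\slide{a}$ directly from the monomial definition $\slide{a}=\sum_{(i_1,\ldots,i_k)\in\compatible{a}} x_{i_1}\cdots x_{i_k}$. The operator $\rope{j}$ sends a monomial $x_{i_1}\cdots x_{i_k}$ to $0$ if any $i_\ell=j$, and otherwise to $x_{\sigma(i_1)}\cdots x_{\sigma(i_k)}$, where $\sigma(p)=p$ for $p<j$ and $\sigma(p)=p-1$ for $p>j$. Thus $\rope{j}\slide{a}$ is a sum of relabelled monomials indexed by the subset of $\compatible{a}$ avoiding the value $j$, and the goal is to identify this sum with either a single slide polynomial or zero.

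Next I will guess the target: define $a'=(a'_1,\ldots,a'_k)$ by $a'_\ell=a_\ell$ if $a_\ell<j$, $a'_\ell=j-1$ if $a_\ell=j$, and $a'_\ell=a_\ell-1$ if $a_\ell>j$. A quick case check shows $a'$ is weakly increasing. The sequence $a'$ lies in $\winc$ except in the single degenerate situation $j=1$ and $a_1=1$, where $a'_1=0$; in that case every $(i_1,\ldots,i_k)\in\compatible{a}$ must have $i_1=1=j$, so $\rope{j}\slide{a}=0$ automatically.

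Assume now $a'\in\winc$. I will define the map $\phi$ from those $(i_1,\ldots,i_k)\in\compatible{a}$ satisfying $i_\ell\neq j$ for all $\ell$ to $\compatible{a'}$ by $\phi(i_1,\ldots,i_k)=(\sigma(i_1),\ldots,\sigma(i_k))$, with inverse shifting entries $\geq j$ upward by one. Since $\phi$ preserves each monomial by construction, bijectivity of $\phi$ will immediately give $\rope{j}\slide{a}=\slide{a'}$. The main obstacle is verifying that $\phi$ and its inverse land in the required compatible-sequence sets: the bound $i'_\ell\leq a'_\ell$ is immediate from the definition of $a'$, and weak monotonicity is preserved because $\sigma$ is order-preserving on $\{1,2,\ldots\}\setminus\{j\}$. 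What requires attention is the strict-ascent condition. A short case analysis on the relative positions of $a_\ell, a_{\ell+1}$ with respect to $j$ shows that whenever $a'_\ell<a'_{\ell+1}$ one also has $a_\ell<a_{\ell+1}$ (the only coalescence is $a_\ell=j-1$, $a_{\ell+1}=j$, which gives $a'_\ell=a'_{\ell+1}$), so the strict ascent $i_\ell<i_{\ell+1}$ inherited from $\compatible{a}$ transfers through $\sigma$ to a strict ascent in $\phi(i_1,\ldots,i_k)$; the reverse direction is analogous.
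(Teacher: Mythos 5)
Your approach---a direct combinatorial bijection on compatible sequences---is genuinely different from the paper's, which argues by induction on degree using a commutation relation between $\dope{i}$ and $\rope{j}$ together with \Cref{thm:Di_and_slides}. However, there is a real gap, and it lies exactly in the step you dismiss with ``the reverse direction is analogous.'' You correctly isolate the coalescence case $a_\ell = j-1$, $a_{\ell+1}=j$, which gives $a'_\ell = a'_{\ell+1}=j-1$, but this is precisely where the bijection fails. For the forward direction one needs $a'_\ell<a'_{\ell+1}\Rightarrow a_\ell<a_{\ell+1}$, which holds. For the inverse $\tau$, one needs: for every $(i'_1,\ldots,i'_k)\in\compatible{a'}$, the sequence $(\tau(i'_1),\ldots,\tau(i'_k))$ is strictly increasing at each $\ell$ with $a_\ell<a_{\ell+1}$. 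In the coalescence case $a_\ell<a_{\ell+1}$ yet $a'_\ell=a'_{\ell+1}$, so $\compatible{a'}$ permits $i'_\ell=i'_{\ell+1}$, while $\compatible{a}$ forbids $\tau(i'_\ell)=\tau(i'_{\ell+1})$. Thus $\phi$ is injective but not surjective onto $\compatible{a'}$, and the identification $\rope{j}\slide{a}=\slide{a'}$ is false.

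Concretely, take $a=(2,3)$ and $j=3$. Then $\slide{(2,3)}=x_1x_2+x_1x_3+x_2x_3$, so $\rope{3}\slide{(2,3)}=x_1x_2=\slide{(1,2)}$, whereas your $a'=(2,2)$ gives $\slide{(2,2)}=x_1^2+x_1x_2+x_2^2$. The lemma is of course still true for this example, but the correct target requires a cascading downward adjustment: the image of $\phi$ is the set of weakly increasing $(i'_\ell)$ with $i'_\ell\le a'_\ell$ and strict ascents at every $\ell$ where $a_\ell<a_{\ell+1}$ (not merely where $a'_\ell<a'_{\ell+1}$). To repair the argument you would need to identify this set with $\compatible{b}$ for a correctly chosen $b\in\winc$, or invoke the $\overline{a}$ machinery for not-necessarily-weakly-increasing index sequences discussed after Example~\ref{ex:143}; as written the proof does not go through.
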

\begin{proof}
Assume the result is true for all lower degree slide polynomials. 
By \Cref{thm:Di_and_slides}, it suffices to show that $\dope{i}\rope{j}\slide{a}=0$ for all $i$, except at most one for which $\dope{i}\rope{j}\slide{a}=\slide{b}$ for some $b\in \mathsf{Winc}$. 

Let $a=(a_1,\ldots,a_k)\in \winc$, and let $a'=(a_1,\ldots,a_{k-1})\in \winc$. 
The identity $$\dope{i}\rope{j}=\begin{cases}\dope{i}&\text{if }i\le j-2\\
\dope{i}+\rope{j-1}\dope{i+1}&\text{if }i=j-1\\
\rope{j}\dope{i+1}&\text{if }i\ge j,\end{cases}$$
together with \Cref{thm:Di_and_slides} implies that $$\dope{i}\rope{j}\slide{a}=\begin{cases}\delta_{i,a_k}\slide{a'}&\text{if }a_k\le j-1\\\delta_{i,a_k-1}\rope{j-1}\slide{a'}&\text{if }a_k=j\\\delta_{i,a_k-1}\rope{j}\slide{a'}&\text{if }a_k\ge j+1,\end{cases}$$ and we conclude by the inductive hypothesis.
\end{proof}

\begin{cor}
\label{cor:slide_mult_easy}
 The product of slide polynomials is slide-positive.   
\end{cor}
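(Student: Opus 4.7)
The plan is to induct on the total degree $d = \deg \slide{a} + \deg \slide{b}$, with the base case $d = 0$ being trivial as $\slide{\varnothing}\slide{\varnothing} = 1 = \slide{\varnothing}$.

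For the inductive step, I would first establish a Leibniz rule for $\dope{i}$. Applying $\rope{i+1}^{\infty}$ to the standard Leibniz rule $\partial_i(fg) = (\partial_i f)g + (s_i f)(\partial_i g)$ (with $s_i$ swapping $x_i \leftrightarrow x_{i+1}$), and using the direct computation that $\rope{i+1}^{\infty}(s_i f) = \rope{i+1}^{\infty}\rope{i}(f)$ (both equal $f(x_1,\ldots,x_{i-1},0,x_i,0,0,\ldots)$), yields
\[
\dope{i}(fg) = (\dope{i}f)\,\rope{i+1}^{\infty}(g) + \rope{i+1}^{\infty}\rope{i}(f)\,(\dope{i}g).
\]
Setting $f = \slide{a}$ and $g = \slide{b}$: \Cref{thm:Di_and_slides} gives $\dope{i}\slide{a}$ and $\dope{i}\slide{b}$ as slides of lower degree or zero, while iterated application of \Cref{lem:Rislidepos} shows that $\rope{i+1}^{\infty}\slide{b}$ and $\rope{i+1}^{\infty}\rope{i}\slide{a}$ are each a slide or zero. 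Therefore $\dope{i}(\slide{a}\slide{b})$ is a $\{0,1\}$-combination of products of two slides of total degree $d-1$, and by the inductive hypothesis each such product is slide-positive, hence so is $\dope{i}(\slide{a}\slide{b})$.

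To conclude, \Cref{cor:slide_expansion_easy} identifies the coefficient of $\slide{c}$ in $\slide{a}\slide{b}$, for $c = (c_1 \le \cdots \le c_d) \in \winc$, as $\dope{c_1}\cdots \dope{c_d}(\slide{a}\slide{b})$. Writing $\dope{c_d}(\slide{a}\slide{b}) = \sum_{c'}m^{c'}\slide{c'}$ with $m^{c'} \ge 0$ (just established), and using \Cref{thm:Di_and_slides} to observe that $\dope{c_1}\cdots\dope{c_{d-1}}\slide{c'} \in \{0,1\}$ for $|c'| = d - 1$, we see each coefficient is a nonnegative integer, completing the induction.

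The main obstacle is finding a form of the Leibniz rule in which both summands factor as $\rope{}$-operators applied to \emph{single slide polynomials}, so that \Cref{lem:Rislidepos} is applicable; a naive Leibniz expansion leaves an awkward $s_i$-action that is not controlled by the lemma. The identity $\rope{i+1}^{\infty}(s_i f) = \rope{i+1}^{\infty}\rope{i}(f)$ is the key step that resolves this, replacing the $s_i$-action by a pure $\rope{}$-action whose effect on slide polynomials is well understood.
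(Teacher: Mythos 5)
Your proof is correct and takes essentially the same approach as the paper: you establish a Leibniz rule for $\dope{i}$, invoke \Cref{thm:Di_and_slides} and (iterated) \Cref{lem:Rislidepos} to control the factors, and then extract coefficients via \Cref{cor:slide_expansion_easy}. The only cosmetic difference is that you derive the variant $\dope{i}(fg)=(\dope{i}f)\rope{i+1}^{\infty}(g)+\rope{i+1}^{\infty}\rope{i}(f)(\dope{i}g)$ (coming from $\partial_i(fg)=(\partial_if)g+(s_if)(\partial_ig)$), whereas the paper's \eqref{eq:leibniz} places the extra $\rope{i}$ on $g$ rather than $f$; both are equally valid.
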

\begin{proof}
By \Cref{cor:slide_expansion_easy} it suffices to show that $\dope{i}(fg)$ is slide positive if each of $f,g$ are slide positive.
    For $f,g\in \poly$ we have a ``Leibniz rule'' that says:
    \begin{align}\label{eq:leibniz}
        \dope{i}(fg)=\dope{i}(f)\rope{i+1}^{\infty}\rope{i}(g)+\rope{i+1}^{\infty}(f)\dope{i}(g).
    \end{align}
    If $f,g$ are slide polynomials, then by ~\Cref{thm:Di_and_slides} we know that $\dope{i}(f),\dope{i}(g)$ are either slide polynomials or $0$, so from \Cref{lem:Rislidepos} the slide positivity follows by induction.
    \end{proof}

Our second application is to determine the inverse of the ``Slide Kostka'' matrix, i.e. express monomials in terms of slide polynomials. 
This was obtained by the first and third author via involved combinatorial means in \cite[Theorem 5.2]{NT_flagged}.  

To state the result, fix a sequence $a=(a_1,\dots,a_k)\in \winc$. 
Group equal terms and write $a=(M_1^{m_1},M_2^{m_2},\ldots,M_p^{m_p})$, with $M_1<\cdots<M_p$. 
Set $M_0\coloneqq 0$. 
For a fixed $i\in\{1,\ldots,p\}$, define $E_i(a)\subset \winc$ by 
\[
E_i(a)=\{(b_1,\dots,b_{m_i})\suchthat b_{j+1}-b_j\in\{0,1\}=0 \text{ and } b_1>M_{i-1}\},
\] where $b_{m_i+1}\coloneqq M_i$.
Let $n(b)=M_i-b_1$ for $b\in E_i(a)$, which counts the number of $j$ such that $b_{j+1}-b_j=1$ for $1\leq j\leq m_i$.
Finally let 
\[
E(a)=\{b\in \winc\suchthat b=e^1\cdots e^{p} \text{ where each } e^i\in E_i(a)\},
\] 
To $b=e^1\cdots e^{p}\in E(a)$, assign the sign $\epsilon(b)=(-1)^{\sum_i n(e^i)}$. 
For instance, if $a=(2,4,4)$ then $E(a)=\{(2,4,4),(1,4,4),(2,3,4),(1,3,4),(2,3,3),(1,3,3)\}$
 with respective signs $1,-1,-1,1,-1,1$.
\begin{cor}[{\cite[Theorem 5.2]{NT_flagged}}]
    The slide expansion of any monomial is signed multiplicity-free. 
    Explicitly, for any $a=(a_1,\dots,a_k)\in \winc$, we have
    \begin{equation}\label{eqn:SlideKostka}
x_{a_1}\cdots x_{a_k}=\sum_{b=(j_1,\ldots,j_k)\in E(a)}\epsilon(b)\slide{b}.
    \end{equation}
\end{cor}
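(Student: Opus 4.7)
The plan is to invoke \Cref{cor:slide_expansion_easy} and explicitly compute $\dope{i_1}\cdots\dope{i_k}(x_{a_1}\cdots x_{a_k})$ for every weakly increasing sequence $(i_1\le\cdots\le i_k)\in\winc$, then identify the nonzero contributions with $E(a)$.

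First I would establish a simple action of $\dope{i}$ on a monomial $x_1^{p_1}\cdots x_r^{p_r}$. Using \Cref{defn:Di} directly: if any $p_s>0$ with $s>i+1$, or if both $p_i>0$ and $p_{i+1}>0$, or if $p_i=p_{i+1}=0$, the result is $0$; otherwise, writing $h=x_1^{p_1}\cdots x_{i-1}^{p_{i-1}}$, we get
\[
\dope{i}(h\cdot x_i^{p_i}x_{i+1}^{p_{i+1}})=\begin{cases}h\cdot x_i^{p_i-1}&\text{if }p_{i+1}=0,\ p_i>0,\\ -h\cdot x_i^{p_{i+1}-1}&\text{if }p_i=0,\ p_{i+1}>0.\end{cases}
\]
In other words $\dope{i}$ either lowers the exponent of the top variable by one (preserving its index and sign), or shifts the top block down by one and flips the sign, and is zero in every other situation.

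Next I would iterate this on $g=x_{M_1}^{m_1}\cdots x_{M_p}^{m_p}$, applying operators from the innermost $\dope{i_k}$ outwards. Because the rightmost block is $x_{M_p}^{m_p}$, the only nonzero choices for $\dope{i_k}$ are $i_k=M_p$ (keep, no sign) or $i_k=M_p-1$ provided $M_p-1>M_{p-1}$ (shift, sign $-1$). Continuing to peel off the $M_p$-block produces a weakly decreasing sequence $v_1\ge v_2\ge\cdots\ge v_{m_p}$ (these are $i_k,i_{k-1},\ldots,i_{k-m_p+1}$) with $v_j-v_{j+1}\in\{0,1\}$, $M_p-v_1\in\{0,1\}$, and $v_{m_p}>M_{p-1}$; each ``shift'' contributes a factor of $-1$, so the total sign from the block is $(-1)^{M_p-v_{m_p}}$. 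Reversing the sequence to $b_j=v_{m_p+1-j}$ recovers exactly the data parameterizing $E_p(a)$, with $b_{m_p+1}\coloneqq M_p$, and the sign is $(-1)^{n(e^p)}$.

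Then I would observe that once the $M_p$-block is exhausted, the remaining monomial is $x_{M_1}^{m_1}\cdots x_{M_{p-1}}^{m_{p-1}}$, and the constraint $i_{k-m_p}\le i_{k-m_p+1}=v_{m_p}$ is automatic: the next block starts at $v'_1\in\{M_{p-1}-1,M_{p-1}\}$, and $v'_1\le M_{p-1}<v_{m_p}$. Hence the choices for successive blocks decouple completely, and by induction on $p$ the nonzero contributions are indexed precisely by $b=e^1\cdots e^p\in E(a)$, each with coefficient $\epsilon(b)=(-1)^{\sum_i n(e^i)}$. Substituting into \Cref{cor:slide_expansion_easy} yields \eqref{eqn:SlideKostka}.

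The main obstacle is purely bookkeeping: verifying that the inequalities defining $E_i(a)$ (namely $b_1>M_{i-1}$ together with $b_{j+1}-b_j\in\{0,1\}$ and the boundary condition $b_{m_i+1}=M_i$) match exactly the constraints forced by iterated applications of $\dope{}$, and that the sign from a block telescopes to $(-1)^{M_i-b_1}$. Once this dictionary is set up, the decoupling across blocks is immediate from $v_{m_i}>M_{i-1}$, which makes the product over blocks fall out cleanly.
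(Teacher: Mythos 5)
Your proposal is correct and follows the paper's proof sketch closely: both invoke \Cref{cor:slide_expansion_easy} and then compute the iterated action of $\dope{i}$ on a monomial, using the key observation that each such application returns $0$ or $\pm$ another monomial (which already gives signed multiplicity-freeness). Where the paper phrases the identification of the nonvanishing index set with $E(a)$ as a matching of one-step recursions and leaves the verification to the reader, you carry it out directly block-by-block, noting that the choices for successive blocks decouple since $v_{m_i}>M_{i-1}$ while the next block's entries are at most $M_{i-1}$, and telescoping the shift count to obtain the sign $(-1)^{M_i-b_1}$ --- the same argument with the deferred combinatorial step filled in explicitly.
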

\begin{proof}[Sketch of the proof]
    By \Cref{cor:slide_expansion_easy} the coefficient of $\slide{j_1,\ldots,j_k}$ for $(j_1,\dots,j_k)\in \winc$ in~\eqref{eqn:SlideKostka} is given by $\dope{j_1}\dots \dope{j_k}(x_{a_1}\cdots x_{a_k})$. 
    By \Cref{defn:Di}, we can compute
    $$\dope{j_k}(x_{a_1}\cdots x_{a_k})=\begin{cases}
    x_{a_1}\cdots x_{a_{k-1}}&\text{if } a_k=j_k\\
    -x_{a_1}\ldots x_{a_p}x_{j_k}^{k-p-1}&\text{if } a_p<j_k, a_{p+1}=\cdots=a_{k}=j_k+1\text{ for some } p< k\\
    0&\text{otherwise.}\end{cases}$$
    Thus $\dope{j_k}(x_{a_1}\cdots x_{a_k})$ is either $0$ or another monomial up to sign, which shows that the expansion is signed multiplicity-free.
    More precisely, let $E'(a)$ be the set of $b=(j_1,\dots,j_k)$ such that $\slide{b}$ has nonzero coefficient in~\eqref{eqn:SlideKostka}. 
   Then it follows that $b\in E'(a)$ either if $j_k=a_k$ and $(j_1,\dots,j_{k-1})\in E'(a_1,\dots,a_{k-1})$, or if $j_k+1=a_k$, there exists $p<k$ such that $a_p<j_k$, $a_{p+1}=\dots=a_{k}=j_k+1$, and $(j_1,\dots,j_{k-1})\in E'(a_1,\dots,a_{p},j_k^{k-p-1})$. 
   We let the interested reader show that $E(a)$ satisfies the same recursion, so that $E(a)=E'(a)$ by induction. The sign is then also readily checked. 
\end{proof}

\subsection{$m$-slides interpolating between monomials and slides}

To conclude this article, we briefly describe how the results generalize to monomials, $m$-slide polynomials and $m$-forest polynomials. The proofs are nearly identical to the case $m=1$ so we omit them.

For a sequence $a=(a_1,\ldots,a_k)$ with $a_i\ge 1$ we define the set of \emph{$m$-compatible sequences} \begin{align}
\compatible[m]{a}=\{(i_1\leq \ldots \leq i_k):i_j\equiv a_j\text{ mod }m,\, i_j\leq a_j,\text{ and if }a_j<a_{j+1}\text{ then }i_j<i_{j+1}\}.
\end{align}
The \emph{$m$-slide polynomial} \cite[Section 8]{NST_1} is the generating function
    \begin{equation}
        \slide[m]{a}=\sum_{(i_1,\dots,i_k)\in \compatible[m]{a}}x_{i_1}\cdots x_{i_k}.
    \end{equation}
For fixed $a=(a_1,\dots,a_k)$ and $m$ sufficiently large we have
$\slide[m]{a}=x_{a_1}\cdots x_{a_k}$ if $(a_1,\ldots,a_k)\in \winc$ and $0$ otherwise.
So we may consider monomials as $\infty$-slide polynomials, and the $m$-slide polynomials as interpolating between slide polynomials and monomials.

\begin{prop}
For $i\geq 1$ consider the \emph{$m$-slide extractors} $\dope[m]{i}\in \End(\poly)$ defined as
$
    \dope[m]{i}\coloneqq \rope{i+1}^{\infty}\tope[m]{i}.
$
For $(b_1\le \cdots \le b_k)\in \winc$ we have
    \[
    \dope[m]{i}\,\slide[m]{b_1,\ldots,b_k}=\delta_{i,b_k}\slide[m]{b_{1},\ldots,b_{k-1}}.
    \]
Consequentially the $m$-slide expansion of a degree $k$ homogenous polynomial $f\in \poly$ is given by
     \[
        f=\sum_{(i_1\le \cdots \le i_k)\in \winc} (\dope[m]{i_1}\cdots\dope[m]{i_k}f) \, \slide[m]{i_1,\dots,i_k}.
     \]
\end{prop}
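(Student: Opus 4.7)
The plan is to follow the $m=1$ arguments in \Cref{subsec:slide_extractors_and_creators} nearly verbatim and invoke \Cref{thm:creation_plus_code_equal_magic}. First, $(\dope[m]{},\winc)$ is a dd-pair: since $\dope[m]{i}f = \rope{i+1}^{\infty}\tope[m]{i}f \in \poly_i$, applying $\tope[m]{j}$ for $j>i$ gives $0$ by degree vanishing, so $\dope[m]{j}\dope[m]{i}=0$, matching the partial product of $\winc$ (where $(j)\cdot(i)$ is undefined for $j>i$).

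Second, I would propose the $m$-slide creators
$$\builder[m]{i}=\sum_{\substack{1\le r\le i \\ r\equiv i\pmod{m}}} x_r\,\rope{r}^{\,i-r}\,\rope{i+1}^{\infty}$$
and verify the creation identity $\sum_i \builder[m]{i}\dope[m]{i}=\idem$ on $\poly^+$ following the template of \Cref{prop:Bi_slide_creators}. Applying $\rope{i+1}^{\infty}$ to $x_i\tope[m]{i}f$ yields $x_i\dope[m]{i}f=\rope{i+1}^{\infty}f-\rope{i}^{m}\rope{i+m+1}^{\infty}f$. Setting $i=r+jm$, applying $\rope{r}^{jm}$ (which sends $x_{r+jm}$ to $x_r$), and using the commutation identity $\rope{r}^{jm}\rope{r+jm}^{m}=\rope{r}^{(j+1)m}$ gives $x_r\rope{r}^{jm}\dope[m]{r+jm}f = \rope{r}^{jm}\rope{r+jm+1}^{\infty}f - \rope{r}^{(j+1)m}\rope{r+(j+1)m+1}^{\infty}f$. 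Summing over $j\ge 0$ telescopes the right-hand side to $\rope{r+1}^{\infty}f-\rope{r}^{\infty}f$; summing over $r\ge 1$ and invoking \eqref{eq:Rinfty_telescope} completes the creation identity.

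Third, the multiplicity-vector code map $\sfc:\winc\to\nvect$ used in \Cref{thm:Di_and_slides} still satisfies the conditions of \Cref{def:codemap}, so \Cref{thm:creation_plus_code_equal_magic} produces a unique dual family. To identify it with the $m$-slides, I would generalize \Cref{prop:Bi_build_slides} to the statement $\builder[m]{p}\slide[m]{b_1,\dots,b_k}=\slide[m]{b_1,\dots,b_k,p}$ for every $p\ge 1$, by an analogous disjoint decomposition of $\compatible[m]{b_1,\dots,b_k,p}$ according to the longest initial segment whose entries are strictly less than the new terminal value, now additionally tracking the mod-$m$ congruence constraints on valid completions. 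This combinatorial verification is the main technical hurdle, since the modular residue conditions constrain both which $r$ contribute to $\builder[m]{p}$ and which extensions lie in $\compatible[m]{b\cdot p}$. The stated $m$-slide expansion then follows directly from \Cref{prop:dualpolyproperties}\ref{it:2.4.3}: only $a\in\winc$ with $|a|=k$ contribute to $f=\sum_a(\ct\dope[m]{a}f)\slide[m]{a}$ for $f$ homogeneous of degree $k$, and for these $\dope[m]{a}f$ is already a constant so the $\ct$ drops out.
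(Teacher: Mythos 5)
Your proposal is correct and follows exactly the route the paper intends: the paper omits the proof with the remark that it is ``nearly identical to the case $m=1$,'' and your adaptation is precisely that, with your proposed creators $\builder[m]{i}=\sum_{r\equiv i\ (\mathrm{mod}\ m),\, r\le i}x_r\rope{r}^{i-r}\rope{i+1}^{\infty}$ agreeing with the explicit monomial description of $\builder[m]{i}$ given in the theorem that follows, and your telescoping computation correctly generalizing \Cref{prop:Bi_slide_creators} via $x_i\dope[m]{i}f=\rope{i+1}^{\infty}f-\rope{i}^{m}\rope{i+m+1}^{\infty}f$. The one step you only sketch, the $m$-analogue of \Cref{prop:Bi_build_slides}, is indeed the main combinatorial verification, but your decomposition strategy is the right one and matches the level of detail the paper itself supplies.
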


\begin{eg}
    Taking $f=\schub{21534}=x_1x_3^2 + x_1x_2x_3 + x_1^2x_3 + x_1x_2^2 + x_1^2x_2 + x_1^3$ as in Example~\ref{eg:doping} we see for instance that 
    \[
    \dope[\infty]{1}\dope[\infty]{2}\dope[\infty]{2}(f)=\dope[\infty]{1}\dope[\infty]{2}(x_1x_2+x_1^2)=\dope[\infty]{1}(x_1)=1
    \]
    which in turns means the coefficient of $x_1x_2^2$ in $\schub{21534}$ is $1$.
\end{eg}

\begin{thm}
Consider \emph{$m$-slide creation operators} $\builder[m]{a}\in \End(\poly)$ that vanish outside of $\poly_a$, and are defined on monomials of $\poly_a$ by
     $$\builder[m]{a}(x_1^{p_1}\cdots x_j^{p_j}x_a^p)=x_1^{p_1}\cdots x_j^{p_j}(\sum_{a-rm> j}x_{a-rm}^{p+1})$$ where $j<a$ and $p_j>0$ (or $j=0$) and $p\geq 0$.
     The following hold.
    \begin{enumerate}[align=parleft,left=0pt,label=(\arabic*)]
        \item For $a=(a_1,\dots,a_k)$ be any sequence with $a_i\ge 1$, we have $\builder[m]{p}\slide[m]{a}=\slide[m]{a_1,\dots,a_k,p}.$ In particular, for any sequence $(b_1,\ldots,b_k)$ with $b_i\ge 1$ we have
    $$\slide[m]{b_1,\ldots,b_k}=\builder[m]{b_k}\cdots \builder[m]{b_1}(1).$$
        \item 
        We have $\sum_{i=1}^{\infty}\builder[m]{i}\dope[m]{i}=
    \sum_{i=1}^{\infty}\builder[m]{i}\tope[m]{i}=\idem$ on $\poly^+$, i.e. $\builder[m]{i}$ are creation operators for both $m$-slides and $m$-forest polynomials. In particular,
    \begin{align*}
    \forestpoly[m]{F}=\sum_{(i_1,\ldots,i_k)\in \Trim{F}}\slide[m]{i_1,\ldots,i_k}.\end{align*}
    \end{enumerate}
\end{thm}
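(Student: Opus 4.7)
The plan is to follow the strategy of Propositions~\ref{prop:Bi_slide_creators}--\ref{prop:slide_exp}, modifying the combinatorial and telescoping arguments by~$m$. Everything rests on two ingredients: the combinatorial identity $\builder[m]{p}\slide[m]{a_1,\ldots,a_k}=\slide[m]{a_1,\ldots,a_k,p}$ in part~(1), and the creation-operator identity $\sum_s\builder[m]{s}\dope[m]{s}=\idem$ on $\poly^+$ in part~(2); the $m$-forest expansion then falls out of applying \Cref{thm:creation_plus_code_equal_magic} to the \newddpair $(\tope[m]{},\Th[m])$.

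For part~(1) I would induct on $k$, mimicking the proof of \Cref{prop:Bi_build_slides}. Working monomial-by-monomial, for each $(i_1,\ldots,i_k)\in\compatible[m]{a}$, applying $\builder[m]{p}$ to $x_{i_1}\cdots x_{i_k}$ produces a set $A_{(i_1,\ldots,i_k)}$ that is empty if $i_k>p$ and otherwise, writing $(i_1,\ldots,i_k)=(i_1,\ldots,i_\ell,p^{k-\ell})$ with $i_\ell<p$, equals $\{(i_1,\ldots,i_\ell,i^{k-\ell+1}):i_\ell<i\leq p,\;i\equiv p\pmod m\}$. The key step is to verify that $\bigsqcup A_{(i_1,\ldots,i_k)}=\compatible[m]{a_1,\ldots,a_k,p}$: disjointness follows since the source is recovered as the longest initial run strictly less than $p$; both inclusions proceed as in \Cref{prop:Bi_build_slides}, with the additional observation that the new entries are congruent to $p$ modulo $m$, precisely matching the $m$-compatibility congruences. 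This gives $\slide[m]{b_1,\ldots,b_k}=\builder[m]{b_k}\cdots\builder[m]{b_1}(1)$ for arbitrary sequences.

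For part~(2), I would telescope as in \Cref{prop:Bi_slide_creators}. Since $\rope{1}^{\infty}=\ct$ vanishes on $\poly^+$, we have $\sum_{r\geq 1}(\rope{r+1}^{\infty}-\rope{r}^{\infty})=\idem$ on $\poly^+$. Telescoping each summand in steps of length $m$ yields
\[
(\rope{r+1}^{\infty}-\rope{r}^{\infty})f=\sum_{j\geq 0}\bigl[f(x_1,\ldots,x_{r-1},0^{jm},x_r,0,\ldots)-f(x_1,\ldots,x_{r-1},0^{(j+1)m},x_r,0,\ldots)\bigr],
\]
and each bracket is recognized as $\rope{r}^{jm}$ applied to $x_{r+jm}\tope[m]{r+jm}f$, using the identity $x_i\tope[m]{i}f=\rope{i+1}^m f-\rope{i}^m f$. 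Since $\dope[m]{r+jm}f\in\poly_{r+jm}$ we may freely insert $\rope{r+jm+1}^{\infty}$, and $\rope{r}^{jm}$ carries $x_{r+jm}$ to $x_r$, so each summand rewrites as $x_r\rope{r}^{jm}\rope{r+jm+1}^{\infty}\dope[m]{r+jm}f$. Summing over $r$ and reindexing $s=r+jm$, the coefficient of $\dope[m]{s}f$ becomes
\[
\sum_{\substack{1\leq k\leq s\\ k\equiv s\pmod m}}x_k\rope{k}^{s-k}\rope{s+1}^{\infty},
\]
which by direct action on monomials of $\poly_s$ equals $\builder[m]{s}$. The equality $\sum_s\builder[m]{s}\tope[m]{s}=\idem$ follows from $\builder[m]{s}\rope{s+1}^{\infty}=\builder[m]{s}$, which holds because $\builder[m]{s}$ vanishes on monomials outside $\poly_s$ while $\rope{s+1}^{\infty}$ acts as the identity on $\poly_s$. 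The expansion of $\forestpoly[m]{F}$ then follows by applying \Cref{thm:creation_plus_code_equal_magic} to $(\tope[m]{},\Th[m])$, using the code map of \cite[Definition 3.5]{NST_1} and the creation operators $\builder[m]{i}$; part~(1) converts $\builder[m]{i_k}\cdots\builder[m]{i_1}(1)$ into $\slide[m]{i_1,\ldots,i_k}$. The main obstacle is the bookkeeping in part~(1) when tracking mod $m$ constraints through the disjoint-union decomposition, but because $\builder[m]{a}$ only produces entries congruent to $a$ modulo $m$, the $m$-compatibility congruences are preserved automatically and the verification parallels the $m=1$ case almost verbatim.
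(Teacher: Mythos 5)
Your proposal is correct and is exactly the adaptation the paper intends: the paper omits the proof of this theorem, stating only that "the proofs are nearly identical to the case $m=1$," and your argument carries out that adaptation faithfully (part (1) mirroring \Cref{prop:Bi_build_slides} with the mod-$m$ congruences tracked through the disjoint union, part (2) mirroring \Cref{prop:Bi_slide_creators} with the telescoping in steps of $m$ and the absorption argument of \Cref{prop:slide_exp}). The only nitpick is that the bracket is $\rope{r}^{jm}\rope{r+jm+1}^{\infty}$ applied to $x_{r+jm}\tope[m]{r+jm}f$ — the truncation $\rope{r+jm+1}^{\infty}$ is needed to kill the trailing variables, not merely "freely inserted" — but your final displayed formula includes it and the identification of the coefficient of $\dope[m]{s}f$ with $\builder[m]{s}$ is correct.
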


\begin{rem}
    For $m=\infty$ we recover the rather straightforward \newddpair $(\dope[\infty]{},\winc)$ for monomials, where for $a_k>1$ we have $\dope[\infty]{i}(x_1^{a_1}\cdots x_k^{a_k})=\delta_{i,k}x_1^{a_1}\cdots x_k^{a_k-1}$, and the creation operators $\builder[\infty]{i}(x_1^{a_1}\cdots x_k^{a_k})=\delta_{i\ge k}x_i(x_1^{a_1}\cdots x_k^{a_k})$.  
\end{rem}

\bibliographystyle{hplain}
\bibliography{schubib}

\end{document}